\documentclass{elsarticle}
\usepackage{bm}
\usepackage{amsmath}
\usepackage{amsfonts}
\usepackage{amsthm}
\usepackage{amssymb}
\usepackage{graphicx}
\usepackage{paralist}
\usepackage{algorithm}
\usepackage{pstool}
\usepackage{wrapfig}
\usepackage{subcaption}
\usepackage{algpseudocode}
\usepackage{mathtools}
\usepackage{fullpage}
\usepackage{etoolbox}

\usepackage{tikz}
\usepackage{pgfplots}
\usepackage{pgfplotstable, filecontents}
\pgfplotsset{compat=1.9}
\usetikzlibrary{pgfplots.groupplots}
\usepgfplotslibrary{fillbetween}
\usetikzlibrary{calc,fit,matrix,arrows,automata,positioning,shapes}
\usepackage{pgfplotstable, booktabs}
\usetikzlibrary{patterns,decorations.pathmorphing,decorations.markings,decorations.pathmorphing}
\pgfplotsset{select coords between index/.style 2 args={
    x filter/.code={
        \ifnum\coordindex<#1\fi
        \ifnum\coordindex>#2\fi
    }
}}

\pgfplotsset{compat=newest}

\usetikzlibrary{calc}

\newcommand{\logLogSlopeTriangle}[5]
{

    \pgfplotsextra
    {
        \pgfkeysgetvalue{/pgfplots/xmin}{\xmin}
        \pgfkeysgetvalue{/pgfplots/xmax}{\xmax}
        \pgfkeysgetvalue{/pgfplots/ymin}{\ymin}
        \pgfkeysgetvalue{/pgfplots/ymax}{\ymax}

        \pgfmathsetmacro{\xArel}{#1}
        \pgfmathsetmacro{\yArel}{#3}
        \pgfmathsetmacro{\xBrel}{#1-#2}
        \pgfmathsetmacro{\yBrel}{\yArel}
        \pgfmathsetmacro{\xCrel}{\xArel}

        \pgfmathsetmacro{\lnxB}{\xmin*(1-(#1-#2))+\xmax*(#1-#2)} 
        \pgfmathsetmacro{\lnxA}{\xmin*(1-#1)+\xmax*#1} 
        \pgfmathsetmacro{\lnyA}{\ymin*(1-#3)+\ymax*#3} 
        \pgfmathsetmacro{\lnyC}{\lnyA+#4*(\lnxA-\lnxB)}
        \pgfmathsetmacro{\yCrel}{\lnyC-\ymin)/(\ymax-\ymin)} 

        \coordinate (A) at (rel axis cs:\xArel,\yArel);
        \coordinate (B) at (rel axis cs:\xBrel,\yBrel);
        \coordinate (C) at (rel axis cs:\xCrel,\yCrel);

        \draw[#5]   (A)-- node[pos=0.5,anchor=north,scale=0.8] {1}
                    (B)-- 
                    (C)-- node[pos=0.5,anchor=west,scale=0.8] {#4}
                    cycle;
    }
}






\newcommand{\dmu}[1][]{\ifthenelse{\isempty{#1}}{\partial_\mubold}{\pder{#1}{\mubold}}}
\newcommand{\mass}[1][]{\ifthenelse{\isempty{#1}}{\Mbm}{\Mbm^{#1}}}
\newcommand{\stvc}[1][]{\ifthenelse{\isempty{#1}}{\ubm}{\ubm^{#1}}}
\newcommand{\stvcdot}[1][]{\ifthenelse{\isempty{#1}}{\dot\ubm}{\dot\ubm^{#1}}}
\newcommand{\stvcbar}[1][]{\ifthenelse{\isempty{#1}}{\bar\ubm}{\bar\ubm^{#1}}}
\newcommand{\res}[1][]{\ifthenelse{\isempty{#1}}{\rbm}{\rbm^{#1}}}
\newcommand{\resimpl}[1][]{\ifthenelse{\isempty{#1}}{\gbm}{\gbm^{#1}}}
\newcommand{\resexpl}[1][]{\ifthenelse{\isempty{#1}}{\fbm}{\fbm^{#1}}}
\newcommand{\cpl}[1][]{\ifthenelse{\isempty{#1}}{\cbm}{\cbm^{#1}}}
\newcommand{\cplprd}[1][]{\ifthenelse{\isempty{#1}}{\tilde\cbm}{\tilde\cbm^{#1}}}

\newcommand{\pstp}[2][]{\ifthenelse{\isempty{#2}}{\ubm_{#1}}{\ubm_{#2}^{#1}}}
\newcommand{\pstgki}[3][]{\ifthenelse{\isempty{#3}}{\kbm_{#1,#2}}{\kbm_{#2,#3}^{#1}}}
\newcommand{\pstgke}[3][]{\ifthenelse{\isempty{#3}}{\hat\kbm_{#1,#2}}{\hat\kbm_{#2,#3}^{#1}}}
\newcommand{\pstgu}[3][]{\ifthenelse{\isempty{#3}}{\ubm_{#1,#2}}{\ubm_{#2,#3}^{#1}}}
\newcommand{\cplstp}[2][]{\ifthenelse{\isempty{#2}}{\cbm_{#1}}{\cbm_{#2}^{#1}}}
\newcommand{\cplprdstp}[2][]{\ifthenelse{\isempty{#2}}{\tilde\cbm_{#1}}{\tilde\cbm_{#2}^{#1}}}

\newcommand{\dt}[1]{\Delta t_{#1}}

\newcommand{\qstg}[3][]{\ifthenelse{\isempty{#3}}{\qbm_{#1,#2}}{\qbm_{#2,#3}^{#1}}}
\newcommand{\prstp}[2][]{\ifthenelse{\isempty{#2}}{\pbm_{#1}}{\pbm_{#2}^{#1}}}

\newcommand{\pder}[2]{\ensuremath{\frac{\partial #1}{\partial #2}}} 




\newcommand{\Fcal}{\ensuremath{\mathcal{F}}}
\newcommand{\Gcal}{\ensuremath{\mathcal{G}}}



\newcommand{\Rbb}{\ensuremath{\mathbb{R} }}


\newcommand\Mbm{{\ensuremath{\bm{M}}}}


\newcommand\cbm{{\ensuremath{\bm{c}}}}

\newcommand\fbm{{\ensuremath{\bm{f}}}}
\newcommand\gbm{{\ensuremath{\bm{g}}}}

\newcommand\kbm{{\ensuremath{\bm{k}}}}

\newcommand\pbm{{\ensuremath{\bm{p}}}}
\newcommand\qbm{{\ensuremath{\bm{q}}}}
\newcommand\rbm{{\ensuremath{\bm{r}}}}

\newcommand\ubm{{\ensuremath{\bm{u}}}}

\newcommand\xbm{{\ensuremath{\bm{x}}}}






\newcommand\mubold{{\ensuremath{\boldsymbol{\mu}}}}



\usepackage[utf8]{inputenc}
\usepackage[toc,page]{appendix}
\usepackage{amsmath}
\usepackage{amsfonts}
\usepackage{tikz}
\usetikzlibrary{patterns}
\usepackage{subcaption}
\usepackage{pgfplots}
\usepackage[a4paper, total={7in, 9in}]{geometry}
\usepackage{indentfirst}
\usepackage{amsthm}
\usepackage{natbib}
\usepackage{graphicx}
\usepackage{hyperref}
\usepackage{cleveref}
\usepackage{float}
\usepackage{xcolor}
\usepackage{bm}
\usepackage{upgreek}
\usepackage{xifthen}
\usepackage{multicol}
\usepackage{nomencl}
\makenomenclature

\renewcommand{\L}{\mathcal{L}}
\newcommand{\nphys}{{m}}
\newtheorem{remark}{Remark}
\newtheorem{thm}{Theorem}

\def\high{\textit{high}}

\makeatletter
\def\ps@pprintTitle{%
   \let\@oddhead\@empty
   \let\@evenhead\@empty
   \def\@oddfoot{\reset@font\hfil\thepage\hfil}
   \let\@evenfoot\@oddfoot
}
\makeatother

\begin{document}

\begin{abstract}
We present an arbitrarily high-order, conditionally stable, partitioned
spectral deferred correction (SDC) method for solving multiphysics problems
using a sequence of pre-existing single-physics solvers. This method extends
the work in \cite{Huang2019, Huang2019b}, which used implicit-explicit
Runge-Kutta methods~(IMEX) to build high-order, partitioned multiphysics
solvers. We consider a generic multiphysics problem modeled as a system of
coupled ordinary differential equations (ODEs), coupled through coupling
terms that can depend on the state of each subsystem; therefore the method
applies to both a semi-discretized system of partial differential equations
(PDEs) or problems naturally modeled as coupled systems of ODEs.
The sufficient conditions to build arbitrarily high-order partitioned SDC schemes are derived.
Based on these conditions, various of partitioned SDC schemes are designed.
The stability of the first-order partitioned SDC scheme is analyzed in detail on a coupled, linear model problem.
We show that the scheme is conditionally stable, and under conditions on the coupling strength, the scheme
can be unconditionally stable.
We demonstrate the performance of the proposed
partitioned solvers on several classes of multiphysics problems { with moderate coupling strength. They include} a
{stiff} linear system of ODEs, advection-diffusion-reaction systems, and
fluid-structure interaction problems with both incompressible and compressible
flows, where we verify the design order of the SDC schemes and study various
stability properties. We also directly compare the accuracy, stability, and cost
of the proposed partitioned SDC solver with the partitioned IMEX method in
\cite{Huang2019, Huang2019b} on this suite of test problems. The results suggest
that the high-order partitioned SDC solvers are more robust than the partitioned
IMEX solvers for the numerical examples
considered in this work, while the IMEX methods require fewer implicit solves.
\end{abstract}

\begin{frontmatter}

\title{High-order partitioned spectral deferred correction solvers for multiphysics problems}
\author[rvt1]{Daniel Z. Huang}
 \ead{zhengyuh@stanford.edu}
\author[rvt2]{Will Pazner}
\ead{pazner1@llnl.gov}
\author[rvt3]{Per-Olof Persson}
\ead{persson@berkeley.edu}
\author[rvt4]{Matthew J. Zahr}
\ead{mzahr@nd.edu}

\address[rvt1]{Institute for Computational and Mathematical Engineering, Stanford University}
\address[rvt2]{Center for Applied Scientific Computing, Lawrence Livermore National Laboratory}
\address[rvt3]{Department of Mathematics, University of California Berkeley}
\address[rvt4]{Department of Aerospace and Mechanical Engineering, University of Notre Dame}
\end{frontmatter}



\section{Introduction}

The numerical simulation of multiphysics problems involving multiple
physical models or multiple simultaneous physical phenomena is significant
in many engineering and scientific applications, e.g.,
fluid-structure interactions~(FSI) in aeroelasticity~\cite{kamakoti2004fluid, chen2007numerical, huang2018simulation} or biomechanics~\cite{bazilevs2006isogeometric, hron2007fluid, chabannes2013high},
chemical reaction in combustion or subsurface flows~\cite{moin2006large, chen2011multiphysics},
electricity and magnetism with hydrodynamics in plasma physics~\cite{toth2000b, chacon2002implicit, cyr2013new}, among others.
These problems are generally
highly nonlinear, feature multiple scales and strong coupling effects, and
require heterogeneous discretizations for the various physics subsystems.
To balance the treatment of these features, solution strategies ranging from
monolithic approaches to partitioned procedures have been proposed.

In the monolithic approach
\cite{hubner2004monolithic, michler2004monolithic, kuttler2008fixed},
all physical subsystems are solved simultaneously. Therefore, this approach is preferred
in the case of strong interactions to ensure stability. However, when the
coupled subsystems are complex, the monolithic procedure often requires significant implementation effort since only
small components of existing software can be re-used. An alternative is
the partitioned procedure
\cite{farhat2000two, piperno2001partitioned, badia2008fluid}, also known
as a staggered or a loosely coupled procedure, where different subsystems are
modeled and discretized separately, and the resulting equations are solved
independently. The coupling occurs through specific terms that are lagged
to previous time instances and communicated between solvers. This procedure
facilitates software modularity and mathematical modeling; however, these
schemes are often low-order accurate \cite{piperno2001partitioned}
and may suffer from lack of stability \cite{causin2005added}.

Recently, a partitioned solver based on implicit-explicit Runge-Kutta
schemes, first proposed to solve stiff additive ordinary differential
equations \cite{zhong1996additive, ascher1997implicit}, was proposed
\cite{van2007higher, froehle2014high} in the context of a specific
multiphysics system: fluid-structure interaction.
This idea is generalized in~\cite{Huang2019}  to build a framework to construct
high-order, partitioned solvers based on monolithic IMEX discretizations
for general multiphysics systems.
Specific implicit-explicit decompositions and consistent predictors are designed to allow
the monolithic discretization to be solved in a partitioned manner, i.e., subsystem-by-subsystem,
and meanwhile, maintain arbitrarily high-order accuracy and {reasonable} stability properties.
{However, due to the explicit component in the IMEX schemes,
these partitioned solvers cannot handle differential-algebraic systems, due to the singular mass matrix.}

This work extends the work in \cite{Huang2019}, and presents arbitrarily high-order {\it partitioned} spectral deferred correction
schemes for general multiphysics systems.
The SDC method, first proposed in \cite{Dutt2000}, is a general class of methods for solving initial value problems determined by
ordinary differential equations (ODEs), wherein high-order accuracy is attained by performing a series of correction
sweeps using a low-order time-stepping method. Implicit versions of this method are shown to
have good stability even for stiff equations~\cite{minion2003semi, bourlioux2003high, hagstrom2006on}. One of the most attractive features of SDC is the flexibility in the choice of the low order solver for the correction equation.
As for multiphysics systems, when a partitioned low-order solver is chosen,  the proposed multiphysics solver can be both arbitrarily high-order accurate and  partitioned.
In the present work, these low-order partitioned solvers are designed using the weakly coupled Gauss-Seidel predictor proposed in \cite{Huang2019}, which features good stability. The accuracy and stability properties of these partitioned multiphysics solvers are analyzed both analytically and numerically.
The comparisons with the partitioned IMEX method in \cite{Huang2019} are presented, which suggest the high-order partitioned SDC solvers are more robust than the partitioned IMEX solvers, while the IMEX are more efficient for the
numerical examples considered in this work.
Moreover, it is worth mentioning, the present SDC scheme is capable of handling differential-algebraic {systems} of equations~(DAE), which is demonstrated in \cref{sec: modified cavity}.

The remainder of the paper is organized as follows. In
Section~\ref{sec: govern eq}, the general form of the multiphysics problem
as a system of $m$ systems of partial differential equations and its
semi-discretization are introduced. In Section~\ref{sec: sdc},
an overview of SDC schemes is provided. In Section~\ref{sec: sdc partitioned solver},
the arbitrarily high-order SDC solvers are introduced and their features such as accuracy and stability are discussed.
Numerical applications are provided in
Section~\ref{sec: app}
that demonstrate the high-order accuracy and good stability properties of the
proposed solvers on an ODE system, an advection-diffusion-reaction system, and fluid-structure
interaction problems with both incompressible flows and compressible flows.

\section{Governing multiphysics equations and semi-discrete formulation}
\label{sec: govern eq}
As in the works \cite{Huang2019,Huang2019b}, we consider a general formulation
for multiple interacting physical processes, described by a coupled system of
partial differential equations,
\begin{equation} \label{eq:multiphys}
   \frac{\partial  u^i}{\partial t}
      = \L^i (  u^i,  c^i, \bm x, t ),
      \quad \bm x \in \Omega^i(c^i),
      \quad t \in [0, T],
\end{equation}
for $1 \leq i \leq \nphys$, where $\nphys$ denotes the number of physical
subsystems. Appropriate boundary conditions, omitted here for brevity, are enforced
for each of the subsystems. The $i$th physical subsystem is modeled as a partial
differential equation with corresponding differential operator denoted as $\L^i$.
The state variable $u^i(\bm x, t)$ denotes the solution to the $i$th
equation in the spatial domain $\Omega^i$, in the time interval $[0,T]$. The
coupling between the physical subsystems is described through the \textit{coupling
term } $c^i(u^1, \ldots, u^{\nphys}, \bm x, t)$, which couples the
$i$th subsystem to the $\nphys - 1$ remaining subsystems. In the most general case,
the differential operator $\L^i$, spatial domain $\Omega^i$, and boundary
conditions all depend on this coupling term.

In this work, we are concerned with the temporal integration of the coupled
system \cref{eq:multiphys}. We first begin by assuming a spatial discretization
for each of the physical subsystems, and then writing a general semi-discretized form
for the $i$th subsystem as a system of ordinary differential equations
\begin{equation} \label{eq:semi-discrete-multiphys}
   \mass^i \frac{\partial \bm u^i}{\partial t}
   = \bm r^i (\bm u^i, \bm c^i, t), \quad t \in [0,T],
\end{equation}
where $\bm M^i$ denotes the fixed mass matrix and $\bm r^i$ denotes the spatial residual
corresponding to a spatial discretization of the problem. Here we use the
notation $\bm u^i$ to denote the discretized solution represented
as a vector of degrees of freedom. In general, the coupling term $\bm c^i$ will
result in a coupling between all $\nphys$ subsystems of ODEs given by
\cref{eq:semi-discrete-multiphys}. We can write this large, coupled system of
equations in the simple form as
\begin{equation} \label{eq:semi-discrete}
   \mass \frac{\partial \bm u}{\partial t}
      = \bm r(\bm u, \bm c, t), \quad t \in [0,T],
\end{equation}
where $\bm u$, $\bm c$, and $\bm r$ represent the state vectors, coupling terms,
and spatial residuals for each of the single-physics subsystems concatenated as
\begin{equation}
   \bm u = \left( \begin{array}{c}
      \bm u^1 \\ \vdots \\ \bm u^\nphys
   \end{array} \right), \qquad
   \bm c = \left( \begin{array}{c}
      \bm c^1 \\ \vdots \\ \bm c^\nphys
   \end{array} \right), \qquad
   \bm r = \left( \begin{array}{c}
      \bm r^1(\bm u^1, \bm c^1, t) \\
      \vdots \\
      \bm r^\nphys(\bm u^\nphys, \bm c^\nphys, t)
   \end{array} \right).
\end{equation}
The mass matrix $\mass$ is considered to be a block-diagonal matrix with the
single-physics mass matrices $\mass^i$ along the diagonal,
\begin{equation}
   \mass = \left( \begin{array}{ccc}
      \mass^1 \\
      & \ddots \\
      & & \mass^\nphys
   \end{array} \right).
\end{equation}

In order to construct \textit{partitioned} time integration schemes for the
system \cref{eq:semi-discrete}, we write the total derivative of the
spatial residuals $\bm r$ as
\begin{equation}
   D_{\bm u} \bm r = \frac{\partial \bm r}{\partial \bm u}
      + \frac{\partial \bm r}{\partial \bm c}
        \frac{\partial \bm c}{\partial \bm u}.
\end{equation}
The terms on the right-hand side of this equation are Jacobian matrices with
block structures given by
\begin{equation}
\frac{\partial \bm r}{\partial \bm u} =
   \left( \begin{array}{ccc}
      \frac{\partial \bm r^1}{\partial \bm u^1} \\
      & \ddots \\
      & & \frac{\partial \bm r^\nphys}{\partial \bm u^\nphys}
   \end{array} \right), \quad
\frac{\partial \bm r}{\partial \bm c} =
   \left( \begin{array}{ccc}
      \frac{\partial \bm r^1}{\partial \bm c^1} \\
      & \ddots \\
      & & \frac{\partial \bm r^\nphys}{\partial \bm c^\nphys}
   \end{array} \right), \quad
\frac{\partial \bm c}{\partial \bm u} =
   \left( \begin{array}{ccc}
      \frac{\partial \bm c^1}{\partial \bm u^1}
         & \cdots
         & \frac{\partial \bm c^1}{\partial \bm u^\nphys}  \\
      \vdots & \ddots & \vdots \\
      \frac{\partial \bm c^\nphys}{\partial \bm u^1}
         & \cdots
         & \frac{\partial \bm c^\nphys}{\partial \bm u^\nphys}
   \end{array} \right).
\end{equation}
The term $\frac{\partial \bm r}{\partial \bm u}$ is block diagonal, and
represents the contribution of a given physics state to its own subsystem. The
second term, $\frac{\partial \bm r}{\partial \bm c}\frac{\partial \bm
c}{\partial \bm u}$ represents the coupling between subsystems.

\begin{remark}
The coupled system of ODEs in
(\ref{eq:semi-discrete-multiphys}) { or}  (\ref{eq:semi-discrete})
is the starting point for the mathematical formulation of the
proposed high-order, partitioned SDC method; therefore the
method applied to problems directly modeled as a system of
ODEs in addition to ODEs that result from semi-discretization
of a system of PDEs.
\end{remark}

\section{Spectral Deferred Corrections} \label{sec: sdc}

Spectral deferred correction methods are a class of numerical methods for
approximating the solution to ordinary differential equations through an
iterative process based on the Picard equation \cite{Dutt2000}. These methods
have garnered a large amount of interest~\cite{minion2003semi, hagstrom2006on, causley2019convergence}, and have been applied to a wide
variety of {problems~\cite{bourlioux2003high, christlieb2014high, crockatt2017arbitrary, minion2018higher, Pazner2016}}. An attractive feature of SDC methods is that they
are capable of arbitrary formal order of accuracy. Additionally, their
implementation is relatively straightforward, since they are typically built by
combining simple low-order methods, such as forward Euler or backward Euler.
Additionally, and most importantly for this work, the iterative nature of SDC
methods is very flexible, allowing for sophisticated semi- and multi-implicit
splitting schemes \cite{minion2003semi,bourlioux2003high,Pazner2016}. It is
this flexibility that will allow us to construct efficient partitioned
multiphysics integrators.

We begin by considering an ordinary differential equation given by
\begin{equation}
\label{eq:sdc_ode}
   \frac{d \bm{u}(t)}{dt} = \bm{r}(\bm{u}, t),
\end{equation}
with initial condition $\bm{u}(t_n) = \bm{u}_n$. The SDC method is a one-step method
to advance the solution from $t_n$ to $t_{n+1} = t_n + \Delta t$. Integrating
from $t_n$ to $t$ (for arbitrary $t > t^n$), we obtain the associated integral
equation,
\begin{equation}
   \bm{u}(t) = \bm{u}_n + \int_{t_n}^t \bm{r}(\bm{u}(\tau), \tau) \, d\tau.
\end{equation}
For the sake of brevity, we will omit the dependence of $\bm{r}$ on $t$. As the
SDC method is an iterative process, let $k$ denote the iterative index and $\bm{u}^{(k)}(t)$ denote an approximation to the solution $\bm{u}(t)$ of the $k$th
iterate.
The SDC method seeks to obtain an improved approximation $\bm{u}^{(k+1)}(t)$ by
approximating the solution to the correction equation
\begin{equation} \label{eq:sdc-correction}
   \bm{u}^{(k+1)}(t) = \bm{u}_n
      + \int_{t_n}^t \left( \bm{r}(\bm{u}^{(k+1)}) - \bm{r}(\bm{u}^{(k)}) \right) \, d\tau
      + \int_{t_n}^t \bm{r}(\bm{u}^{(k)}) \, d\tau.
\end{equation}
In order to obtain the SDC method, this correction equation is discretized by
replacing the integrals with approximations computed using quadrature rules. The
first integral on the right-hand side of \cref{eq:sdc-correction} is
discretized using a low-order method (with order of accuracy $p_{\rm low}$,
typically $p_{\rm low} = 1$). This low-order method usually corresponds to
forward or backward Euler. The second integral is approximated using a
high-order quadrature rule with order of accuracy $p_{\high}$. Each iteration
updates the solution from $\bm{u}^{(k)}$ to $\bm{u}^{(k+1)}$, improving the order of
accuracy of the provision solution by $p_{\rm low}$, up to a maximum of $p_{\rm
high}$ \cite{Dutt2000}.

We begin by selecting a high-order accurate quadrature rule on the interval
$[t^n, t^n + \Delta t]$. The order of accuracy of this quadrature rule,
denoted $p_{\high}$, is equal to the formal order of accuracy of the
resulting SDC method. The abscissas of this quadrature rule, which we denote
\begin{equation} \label{eq:temporal-nodes}
   t_n = t_{n,\,0} < t_{n,\,1} < \cdots < t_{n,\,q} = t_n + \Delta t,
\end{equation}
can be considered to be nodes at which we approximate the solution to the ODE.
For simplicity of notation, we have included the left and right
endpoints of the interval as points in the quadrature rule. This choice is made
for uniformity of notation, and more general quadrature rules can be considered
by assigning zero quadrature weights to one or both of the endpoints. The
temporal nodes \cref{eq:temporal-nodes} give rise to $q$ time sub-steps
$[t_{n,\,j}, t_{n,\,j+1}]$ for $0 \leq j \leq q-1$, with $\Delta t_{n,\,j} = t_{n,\,j+1} - t_{n,\,j}$.
Given a function whose value is known at each of the temporal nodes, its
integral over the sub-interval can be approximated by integrating the resulting
interpolating polynomial. Thus, given a function $\psi(t)$ and nodal values
$\psi_i = \psi(t_{n,\,i})$, we introduce the notation $I_j^{j+1} \psi$ to denote
the resulting approximation to $\int_{t_{n,\,j}}^{t_{n,\,j+1}} \psi(\tau) \, d\tau$,
\begin{equation}
\label{eq:integration}
I_j^{j+1} \psi = \int_{t_{n,\,j}}^{t_{n,\,j+1}} \psi(\tau) \, d\tau = \sum_{i = 0}^{q} w^j_i \psi_i,
\end{equation}
here $w^{j}_i$ is the weight related to abscissa $t_{n,\,i}$.

Given a previous approximation $\bm{u}^{(k)}_{n,\,j}$ for $0 \leq j \leq q$ at the $k$th iteration
and a current approximation $\bm{u}^{(k+1)}_{n,\,j}$, the SDC method produces an updated
approximation at the next temporal node $\bm{u}^{(k+1)}_{n,\,j+1}$ by discretizing
\cref{eq:sdc-correction}. In particular, if forward Euler is used for the
low-order quadrature rule, the solution at the next temporal node is given by
the following \textit{explicit} update:
\begin{equation} \label{eq:sdc-fe}
\text{Forward Euler:} \qquad
   \bm{u}^{(k+1)}_{n,\,j+1} = \bm{u}^{(k+1)}_{n,\,j}
      + \Delta t_{n,\,j} \left( \bm{r}(\bm{u}^{(k+1)}_{n,\,j}) - \bm{r}(\bm{u}^{(k)}_{n,\,j}) \right)
      + I_j^{j+1} \bm{r}(\bm{u}^{(k)}_n).
\end{equation}
Similarly, if backward Euler is used for the low-order quadrature, the solution
at the next temporal node is given by the following \textit{implicit} update:
\begin{equation} \label{eq:sdc-be}
\text{Backward Euler:} \qquad
   \bm{u}^{(k+1)}_{n,\,j+1} = \bm{u}^{(k+1)}_{n,\,j}
      + \Delta t_{n,\,j} \left( \bm{r}(\bm{u}^{(k+1)}_{n,\,j+1}) - \bm{r}(\bm{u}^{(k)}_{n,\,j+1}) \right)
      + I_j^{j+1} \bm{r}(\bm{u}^{(k)}_n).
\end{equation}
Thus, given approximations $\bm{u}^{(k)}_{n,\,j}$ for $0 \leq j \leq q$, improved
approximations $\bm{u}^{(k+1)}_{n,\,j}$ are obtained through a sequence of forward or
backward Euler steps. Each update from $\bm{u}^{(k)}_n$ to $\bm{u}^{(k+1)}_n$,
termed an \textit{SDC sweep}, increases the order of accuracy of the solution by
$p_{\rm low}$, up to a maximum order of $p_{\high}$. Therefore, when using
backward or forward Euler corrections, $p_{\high}$ iterations are generally
required to achieve a formal order accuracy of $p_{\high}$. Starting this
process requires an initial guess for the solution $\bm{u}^{(0)}_{n,\,j}$ for $0 \leq j \leq q$.
Typically it is sufficient to use the previous step solution $\bm{u}^{(0)}_{n,\,j} = \bm{u}_n$ as
an initial guess.

It can be useful to note that the SDC iterations can be viewed as a fixed-point
iteration, converging to the collocation solution $\bm{u}^{\rm col}_j$, which
satisfies
\begin{equation} 
   \bm{u}^{\rm col}_{n,\,j+1} = \bm{u}^{\rm col}_{n,\,j} + I_j^{j+1} \bm{r}(\bm{u}_{n}^{\rm col}).
\end{equation}
Collocation schemes have been studied extensively in the context of
fully-implicit Runge-Kutta methods \cite{Hairer1996,Pazner2017,causley2019convergence}.
These methods typically have very attractive stability and accuracy properties;
however, solving the resulting algebraic systems may be challenging.

\section{Partitioned SDC schemes for multiphysics}
\label{sec: sdc partitioned solver}
In this work, we consider implicit SDC methods corresponding
to the discretized system \cref{eq:sdc-be}. A straightforward application of
this scheme to the multiphysics system \cref{eq:semi-discrete} results in
\begin{equation} \label{eq:sdc-partitioned}
   \bm{M}^i \bm u^{i, (k+1)}_{n,\,j+1} = \bm{M}^i \bm u^{i,(k+1)}_{n,\,j}
      + \Delta t_{n,\,j} \Big(
         \bm r^i( \bm u^{i,(k+1)}_{n,\,j+1}, \bm c^{i,(k+1)}_{n,\,j+1})
       - \bm r^i( \bm u^{i,(k)}_{n,\,j+1}, \bm c^{i,(k)}_{n,\,j+1}) \Big)
      + I_j^{j+1} \bm r( \bm u^{i,(k)}_n, \bm c^{i,(k)}_n).
\end{equation}
Here the first superscript $i$ represents the subsystem number, the
second superscript $k$ represents the iteration number,  and the
subscript $n$ and $j$ represent the time step and the abscissa. The main challenge in solving
the resulting system of equations is the coupling term $\bm c^{i,(k+1)}_{n,\,j+1}$,
which, in general, results in a fully-coupled system of equations. In
order to reduce this coupling, we introduce certain approximations
to this coupling term based on the predictors introduced in \cite{Huang2019}.
The main
idea of the present approach is to reduce the coupling by making use of the
state variables from the previous \textit{iterate} when evaluating the coupling
term. This is in contrast to the IMEX methods presented in \cite{Huang2019},
which lagged the coupling terms one time step to maintain the design order
of the IMEX scheme. We consider only the weak Gauss-Seidel type predictor
$\bm{\tilde c}^{i,(k+1)}_{n,\,j+1}$ for the $i$th subsystem, defined as follows:
\begin{align} \label{eq:weak-gs}
   \bm{\tilde c}^{i,(k+1)}_{n,\,j+1} &= \bm c(\bm u^{1,(k+1)}_{n,\,j+1},
      \ldots, \bm u^{i-1,(k+1)}_{n,\,j+1}, \bm u^{i,(k)}_{n,\,j+1}, \ldots, \bm u^{\nphys,(k)}_{n,\,j+1}) ,
\end{align}
which depends on the the most up-to-date information from the previous $i-1$
subsystems. This choice of predictor implies the \textit{ordering} of the
subsystems is important; see \cite{Huang2019} for a general discussion of the
ordering of subsystems in the context of Gauss-Seidel predictors for
multiphysics partitioned solvers and Section~\ref{sec: app}
for the ordering used for the applications considered in this work.

This choice of predictor is quite simple to implement, and its robustness has
been demonstrated in \cite{Huang2019}. The partitioned solver is then
constructed by replacing the term $\bm c_{n,\,j+1}^{i,(k+1)}$ in
\cref{eq:sdc-partitioned} with its appropriately chosen approximation
$\bm{\tilde c}^{i,(k+1)}_{n,\,j+1}$. The detailed algorithm is summarized in
Algorithm~\ref{ALG: SDC WEAK GS}. It is also worth mentioning that to solve the
system \cref{eq:sdc-partitioned} for $\bm u^{i, (k+1)}_{n,\,j+1}$ with the approximation
\cref{eq:weak-gs}, only the single-physics Jacobian
$\displaystyle{\frac{\partial \bm r^{i}}{\partial \bm u^i}}$ is required
because $\bm{\tilde c}^{i,(k+1)}_{n,\,j+1}$ does not depend on $\bm u^{i,(k+1)}_{n,\,j+1}$.

\begin{algorithm}
 \caption{Spectral deferred correction partitioned multiphysics scheme}
 \label{ALG: SDC WEAK GS}
 \begin{algorithmic}[1]
  \State Set $0$th sweep values $\bm u_{n,\,j}^{i, (0)} = \bm u^{i}_{n}$  for $i = 1, ..., m$,  $j = 0,\,\dots,\,q$
   \For{iterations $k = 0,\,\dots,\,p_{\high}-1$}
    \State Set  $\bm u_{n,\,0}^{i, (k)} = \bm u^{i}_{n}$ for $i = 1, ..., m$
   \For{abscissas $j = 0,\,\dots,\,q-1$}
    \For{physical subsystems $i = 1,\,\dots,\,m$}
     \State Implicit solve for $\bm u^{i, (k+1)}_{n,\,j+1}$:
\[
       ~~~~~~~~~~\mass^i \bm u^{i, (k+1)}_{n,\,j+1} = \mass^i \bm u^{i,(k+1)}_{n,\,j}
      + \Delta t_{n,\,j} \Big(
         \bm r^i( \bm u^{i,(k+1)}_{n,\,j+1}, \bm{\tilde c}^{i,(k+1)}_{n,\,j+1})
       - \bm r^i( \bm u^{i,(k)}_{n,\,j+1}, \bm c^{i,(k)}_{n,\,j+1}) \Big)
      + I_j^{j+1} \bm r( \bm u^{i,(k)}_n, \bm c^{i,(k)}_n) \label{ALG:SOLVE}
\]
    \EndFor
   \EndFor
   \EndFor
   \State Set $\bm u^{i}_{n+1} = u^{i, p_{\high}}_{n,\,q}$ for $i = 1, ..., m$
 \end{algorithmic}
\end{algorithm}

{
\subsection{Partitioned SDC schemes}
\label{sec:sdc_schemes}
Based on the discussion above, we build a family of partitioned SDC schemes by choosing different quadrature points for \cref{ALG: SDC WEAK GS},
which are listed as follows
\begin{enumerate}
\item The first-order scheme~($p_{\high} = 1$), with $2$ abscissas $\{0, 1\}$. The corresponding integrals in \cref{eq:integration} are defined as
\begin{equation*}
I_0^1 \psi = \Delta t \psi(1),
\end{equation*}
which is abbreviated as SDC1.
\item The second-order scheme~($p_{\high} = 2$), with $2$ abscissas $\{0, 1\}$. The corresponding integrals in \cref{eq:integration} are defined as
\begin{equation*}
I_0^1 \psi = \frac{ \Delta t}{2}\psi(0) +  \frac{ \Delta t}{2}\psi(1),
\end{equation*}
which is abbreviated as SDC2.
\item The third-order scheme~($p_{\high} = 3$), with $3$ Gauss-Radau abscissas $\left\{0, \tfrac{1}{3}, 1\right\}$. The corresponding integrals in \cref{eq:integration} are defined as
\begin{equation*}
I_0^1 \psi = \frac{5 \Delta t}{12}\psi(\frac{1}{3})  - \frac{\Delta t}{12}\psi(1) \quad \textrm{and}
\quad I_1^2 \psi = \frac{\Delta t}{3}\psi(\frac{1}{3})  + \frac{\Delta t}{3}\psi(1),
\end{equation*}
which is abbreviated as SDC3-r, for this case, the lower order approximation is specifically chosen~(see \cref{remark_accuracy})  as
\begin{equation*}
\mathcal{C}_j = \Delta t \Big(
         \bm r^i( \bm u^{i,(k+1)}_{n,\,j+1}, \bm{\tilde c}^{i,(k+1)}_{n,\,j+1})
       - \bm r^i( \bm u^{i,(k)}_{n,\,j+1}, \bm c^{i,(k)}_{n,\,j+1})\Big),
       \end{equation*}
where $\Delta t$ is used instead of $\Delta t_{n,\,j}$.
\item The third-order scheme~($p_{\high} = 4$), with $3$ Gauss-Lobatto abscissas $\left\{0, \tfrac{1}{2}, 1\right\}$. The corresponding integrals in \cref{eq:integration} are defined as
\begin{equation*}
I_0^1 \psi = \frac{5 \Delta t}{24}\psi(0)  + \frac{8 \Delta t}{24}\psi(\frac{1}{2}) - \frac{ \Delta t}{24}\psi(1) \quad \textrm{and}
\quad I_1^2  = -\frac{ \Delta t}{24}\psi(0)  + \frac{8 \Delta t}{24}\psi(\frac{1}{2}) + \frac{5 \Delta t}{24}\psi(1),
\end{equation*}
which is abbreviated as SDC3-l. This scheme uses a fourth-order quadrature, but only three SDC sweeps.
\item The fourth-order scheme~($p_{\high} = 4$), with $3$ Gauss-Lobatto abscissas $\left\{0, \frac{1}{2}, 1\right\}$. The corresponding integrals in \cref{eq:integration} are defined as
\begin{equation*}
I_0^1 \psi = \frac{5 \Delta t}{24}\psi(0)  + \frac{8 \Delta t}{24}\psi(\frac{1}{2}) - \frac{ \Delta t}{24}\psi(1) \quad \textrm{and}
\quad I_1^2  = -\frac{ \Delta t}{24}\psi(0)  + \frac{8 \Delta t}{24}\psi(\frac{1}{2}) + \frac{5 \Delta t}{24}\psi(1),
\end{equation*}
which is abbreviated as SDC4.
\end{enumerate}
}

\subsection{Accuracy of the partitioned SDC schemes} \label{sec:sdc-accuracy}
To analyze the order of accuracy of our partitioned SDC schemes, let $\bm{u}(t)$ be the exact solution of \cref{eq:sdc_ode}, which satisfies
\begin{equation}
\label{eq:acc_exact}
\bm{u}(t_{n,\,j+1}) = \bm{u}(t_{n,\,j}) + \int_{t_{n,\,j}}^{t_{n,\,j+1}} \bm{r}(\bm{u}(\tau), \tau) \, d\tau,
\end{equation}
here we assume the function $\bm{r}$ is $C^{1}$ continuous, which is sufficient to guarantee the local existence and uniqueness of the solution. Let $L$ denote the Lipschitz constant of $\bm{r}$.

The update equations~{(\cref{eq:sdc-fe}, \cref{eq:sdc-be}, or \cref{ALG:SOLVE} in \cref{ALG: SDC WEAK GS})} are written in a general form as
\begin{equation}
\label{eq:acc_general}
	\bm{u}^{(k+1)}_{n,\,j+1} = \bm{u}^{(k+1)}_{n,\,j} + \mathcal{C}_j(\bm{u}^{(k+1)}_n, \bm{u}^{(k)}_n)
      + I_j^{j+1} \bm{r}(\bm{u}^{(k)}_n),
\end{equation}
where $\mathcal{C}_j(\bm{u}^{(k+1)}_n, \bm{u}^{(k)}_n)$ denotes the low-order approximation from $t_{n,j}$ to $t_{n,j+1}$.
We now give the {\it local error} of SDC schemes by induction, on the assumption that the numerical solution at the previous solution point $t_n$ is exact.
This result extends standard convergence results in the SDC literature \cite{Hansen2011,causley2019convergence,Tang2012} to the case of the partitioned SDC presented here.

\begin{thm} \label{thm:local-error} 
We assume that the low-order correction $\mathcal{C}_j(\cdot,\,\cdot)$ satisfies the following Lipschitz-type conditions:
\begin{equation}
\begin{aligned}
\label{eq:sufficient_condition}
\left \| \mathcal{C}_j(\bm{u}^{(k+1)}_n, \bm{u}^{(k)}_n) - \mathcal{C}_j(\bm{u}(t_{n,j}), \bm{u}^{(k)}_n) \right\| &\leq C \Delta t L \sum_{j'\leq j+1} \| \bm{u}^{(k+1)}_{n, j'} - \bm{u}(t_{n,j'}) \|,\\
\left \| \mathcal{C}_j(\bm{u}(t_{n,j}), \bm{u}^{(k)}_n) \right\| &\leq C \Delta t L \sum_{j'\leq j+1} \| \bm{u}(t_{n, j'}) - \bm{u}^{(k)}_{n,j'} \|,
\end{aligned}
\end{equation}
where the constant $C$ is independent of $j$, $k$, and $\Delta t$.
And the function $\bm{r}$ is $C^{p_{\high}}$ continuous.
Then, we have the following estimate for the local error of the SDC scheme:
\begin{equation}
\label{eq:acc_result}
         \| \bm{u}_{n,\,j}^{(k)} - \bm{u}(t_{n,\,j}) \| = \mathcal{O}\left(\Delta t^{\min\{k+1,p_{\high}+1\}}\right) \quad \text{for} \quad 0 \leq j \leq q,\, 0\leq k.
\end{equation}
\end{thm}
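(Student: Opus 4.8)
The plan is to prove \cref{eq:acc_result} by induction on the sweep index $k$, obtaining the one-order-per-sweep gain through a discrete Gronwall argument over the $q$ sub-step nodes. Throughout, write $e_{n,j}^{(k)} = \bm{u}_{n,j}^{(k)} - \bm{u}(t_{n,j})$ for the nodal error, and note that the reset of the left endpoint in \cref{ALG: SDC WEAK GS} forces $e_{n,0}^{(k)} = \bm{0}$ for every $k$, since node $0$ is set to the (assumed exact) value $\bm{u}_n = \bm{u}(t_n)$. For the base case $k=0$ we have $\bm{u}_{n,j}^{(0)} = \bm{u}_n$, so $e_{n,j}^{(0)} = \bm{u}(t_n) - \bm{u}(t_{n,j})$, and $C^1$ regularity of $\bm{u}$ gives $\| e_{n,j}^{(0)} \| \le \| \dot{\bm{u}} \|_\infty \, |t_{n,j}-t_n| = \mathcal{O}(\Delta t)$, matching $\min\{0+1,p_{\high}+1\}=1$.

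For the inductive step, assume $\| e_{n,j}^{(k)} \| = \mathcal{O}(\Delta t^{\min\{k+1,p_{\high}+1\}})$ for all $j$. Subtracting the exact relation \cref{eq:acc_exact} from the update \cref{eq:acc_general} and telescoping from node $0$ to node $j+1$ (using $e_{n,0}^{(k+1)}=\bm{0}$) writes $e_{n,j+1}^{(k+1)}$ as a sum over sub-steps of four pieces, obtained by adding and subtracting $\mathcal{C}_l(\bm{u}(t_{n,\cdot}),\bm{u}_n^{(k)})$ and $I_l^{l+1}\bm{r}(\bm{u}(t_{n,\cdot}))$: (I) $\mathcal{C}_l(\bm{u}_n^{(k+1)},\bm{u}_n^{(k)}) - \mathcal{C}_l(\bm{u}(t_{n,\cdot}),\bm{u}_n^{(k)})$; (II) $\mathcal{C}_l(\bm{u}(t_{n,\cdot}),\bm{u}_n^{(k)})$; (III) $I_l^{l+1}\bigl[\bm{r}(\bm{u}_n^{(k)}) - \bm{r}(\bm{u}(t_{n,\cdot}))\bigr]$; and (IV) the genuine quadrature error $I_l^{l+1}\bm{r}(\bm{u}(t_{n,\cdot})) - \int_{t_{n,l}}^{t_{n,l+1}}\bm{r}(\bm{u}(\tau))\,d\tau$.

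Pieces (II) and (III) are controlled by the inductive hypothesis: the second bound in \cref{eq:sufficient_condition} gives $\| \text{(II)} \| \le C\Delta t L \sum_{j'\le l+1}\| e_{n,j'}^{(k)} \|$, and Lipschitz continuity of $\bm{r}$ with the $\mathcal{O}(\Delta t)$ size of the quadrature weights gives $\| \text{(III)} \| \le C\Delta t L \max_{j'}\| e_{n,j'}^{(k)} \|$, so both are $\mathcal{O}(\Delta t^{1+\min\{k+1,p_{\high}+1\}})$. Piece (IV) is the standard local error of a quadrature of order $p_{\high}$ applied to the $C^{p_{\high}}$ integrand $\bm{r}(\bm{u}(\cdot))$, hence $\mathcal{O}(\Delta t^{p_{\high}+1})$. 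Summing these over the $\mathcal{O}(1)$ sub-steps and checking that $1+\min\{k+1,p_{\high}+1\}$ and $p_{\high}+1$ combine to $\min\{k+2,p_{\high}+1\}$ in both regimes $k<p_{\high}$ and $k\ge p_{\high}$, the non-recursive contribution is $\mathcal{O}(\Delta t^{\min\{k+2,p_{\high}+1\}})$. The recursive piece (I) is bounded by the first condition in \cref{eq:sufficient_condition} by $C\Delta t L \sum_{j'\le l+1}\| e_{n,j'}^{(k+1)} \|$; after summation and taking the maximum over the node index, a smallness condition on $\Delta t$ moves this maximum to the left, yielding $\| e_{n,j}^{(k+1)} \| = \mathcal{O}(\Delta t^{\min\{k+2,p_{\high}+1\}})$ and closing the induction.

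The step I expect to require the most care is the $\min\{\cdot,\cdot\}$ bookkeeping of the $\Delta t$ powers — in particular verifying that the order gained per sweep saturates correctly at $p_{\high}+1$ precisely when the quadrature error (IV) overtakes the propagated terms (II)--(III) — and handling the \emph{implicit} coupling encoded by the $j'\le j+1$ summation in \cref{eq:sufficient_condition}, which makes $e_{n,j+1}^{(k+1)}$ appear on both sides and necessitates the Gronwall-type absorption under a smallness condition on $\Delta t$ (legitimate because $q$ is fixed and independent of $\Delta t$).
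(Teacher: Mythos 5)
Your proposal is correct and follows essentially the same route as the paper's proof: the same decomposition obtained by adding and subtracting $\mathcal{C}_j(\bm{u}(t_{n,j}),\bm{u}_n^{(k)})$ and the exact integral, the same use of the two Lipschitz-type hypotheses together with the order-$p_{\high}$ quadrature error and the inductive bound on the $k$th sweep, and the same absorption of the implicit $j'\le j+1$ term via a smallness condition on $\Delta t$ (the paper's $(1-C\Delta t L)$ factor). The only cosmetic difference is that you telescope over the sub-steps and invoke a discrete Gronwall inequality in one shot, whereas the paper runs a nested induction on $j$ inside the induction on $k$.
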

\begin{proof} 
We proceed by induction on $j$ and $k$.
It is easy to verify that \cref{eq:acc_result} holds for base cases with $j=0$ or $k=0$.
Subtracting  \cref{eq:acc_exact} from \cref{eq:acc_general}, we have
\begin{equation}
\label{eq:acc_subtract}
	\bm{u}^{(k+1)}_{n,\,j+1} - \bm{u}(t_{n,\,j+1})
	= \bm{u}^{(k+1)}_{n,\,j} - \bm{u}(t_{n,\,j}) + \mathcal{C}_j(\bm{u}^{(k+1)}_n, \bm{u}^{(k)}_n)
      + I_j^{j+1} \bm{r}(\bm{u}^{(k)}_n) - \int_{t_{n,\,j}}^{t_{n,\,j+1}} \bm{r}(\bm{u}(\tau)) \, d\tau.
\end{equation}
Since the underlying quadrature rule has order of accuracy $p_{\high}$, we have
\begin{equation} \label{eq:integ_approx}
I_j^{j+1} \bm{r}(\bm{u}^{(k)}_n) - \int_{t_{n,\,j}}^{t_{n,\,j+1}} \bm{r}(\bm{u}(\tau)) \, d\tau
 = I_j^{j+1} \left[ \bm{r}(\bm{u}^{(k)}_n) - \bm{r}(\bm{u}(t)) \right]
 + \Delta t_{n,j}\mathcal{O}(\Delta t^{p_{\high}}).
\end{equation}
By induction, we assume \cref{eq:acc_result} holds for all $k' \leq k$ and $k^{'} = k+1,\, j' \leq j$.
Using the inductive assumption, we have
\begin{equation} \label{eq:induction}
\bm{u}^{(k+1)}_{n,\,j} - \bm{u}(t_{n,\,j}) = \mathcal{O}\left(\Delta t^{\min\{k+2, p_{\high}+1\}}
\right).
\end{equation}
Making use of \cref{eq:integ_approx} and \cref{eq:induction}, we see that \cref{eq:acc_subtract} is then reduced to
\begin{equation}
\bm{u}^{(k+1)}_{n,\,j+1} - \bm{u}(t_{n,\,j+1})
   = \mathcal{C}_j(\bm{u}^{(k+1)}_n, \bm{u}^{(k)}_n) + \mathcal{O}(\Delta t^{\min\{k+2, p_{\high}+1\}}).
\end{equation}
By using property \eqref{eq:sufficient_condition} and induction on $k$, we have
\begin{align*}
\| \mathcal{C}_j(\bm{u}^{(k+1)}_n, \bm{u}^{(k)}_n) \|
   &= \| \mathcal{C}_j(\bm{u}^{(k+1)}_n, \bm{u}^{(k)}_n) - \mathcal{C}_j(\bm{u}, \bm{u}^{(k)}_n) + \mathcal{C}_j(\bm{u}, \bm{u}^{(k)}_n) \| \\
   &\leq \sum_{j'\leq j+1} C \Delta t L \| \bm{u}^{(k+1)}_{n,j'} - \bm{u}(t_{n,j'}) \| + \Delta t \mathcal{O}(\Delta t^{\min\{k+1, p_{\high}+1\}}),
\end{align*}
Now, by induction on $j$, we obtain
\[
   (1 - C \Delta t L) \|\bm{u}^{(k+1)}_{n,\,j+1} - \bm{u}(t_{n,\,j+1}) \|
    = \sum_{j' \leq j} C \Delta t L \| \bm{u}^{(k+1)}_{n,j'} - \bm{u}(t_{n,j'}) \| + \mathcal{O}(\Delta t^{\min\{k+2, p_{\high}+1\}})
    = \mathcal{O}(\Delta t^{\min\{k+2, p_{\high}+1\}}).
\]
Therefore, \cref{eq:acc_result} holds for all $k \geq 0$ and $0 \leq j \leq q$.
This finishes our proof of \cref{eq:acc_result}, which also indicates the optimal global error of SDC schemes is  $\mathcal{O}(\Delta t^{p_{high}})$, when $p_{high}$ sweeps are applied.
\end{proof}

\begin{remark}
As for the hypothesis \eqref{eq:sufficient_condition} of Theorem \ref{thm:local-error}, it is easy to verify that the forward Euler approximation~\cref{eq:sdc-fe}, backward Euler approximation~\cref{eq:sdc-be} and our weak Gauss-Seidel predictor based approximation in~\cref{ALG: SDC WEAK GS} all satisfy the sufficient conditions.
Therefore, these three approximations can all lead to design order of accuracy.
\end{remark}

\begin{remark}  For these three schemes, even if we change $\Delta t_{n,\,j}$ to $\alpha_j \Delta t_{n,\,j}$~{(See SDC3-r)}, for any $\alpha_j = \mathcal{O}(1)$ in the low order approximation $\mathcal{C}_j$, the sufficient condition~\eqref{eq:sufficient_condition} still holds.
That means to achieve arbitrary high order accuracy, the low order approximation $\mathcal{C}_j$ in SDC schemes is not required to be an accurate approximation, which gives more flexibility to design new stable schemes.
\label{remark_accuracy}
\end{remark}
\subsection{Stability of the partitioned SDC schemes}
\label{sec:sdc-stability}
{The stability properties of SDC schemes have been {\it{numerically}} analyzed widely in \cite{Dutt2000, bourlioux2003high, causley2019convergence}, in which L-stable and A-stable properties are reported.
 We will analyze the stability of the partitioned SDC schemes based on a model linear system

\begin{equation}
\dot\stvc = \bm{A} \stvc
\end{equation}
where $\bm{A} = \bm{L} + \bm{D} + \bm{U}$ is an $n \times n$ matrix, $\bm{L}$ is the lower triangular part of $\bm{A}$, $\bm{U}$ is the upper triangular
part of $\bm{A}$, and $\bm{D}$ is the diagonal of $\bm{A}$. The system is treated as $n$ subsystems and the coupling term is taken as $\cbm(\ubm) = (\bm{L}+\bm{U})\ubm$
We will prove SDC1 scheme is unconditionally stable, when $\bm{A}$ is strictly diagonally
dominant with non-positive diagonal entries.
The update matrix for the SDC1 takes the form
\begin{equation}
\label{eq:update_matrix_SDC1}
 \bm{C} = (\bm{I} - \dt{}\bm{L} - \dt{}\bm{D})^{-1}
                  (\bm{I} + \dt{}\bm{U}).
\end{equation}
Any of its eigenpairs $(\lambda, \xbm)$ with $|\lambda| \geq 1$ satisfy the relation
\begin{equation}
 (\bm{I} - \dt{}\bm{L} - \dt{}\bm{D})^{-1}
                   (\bm{I} + \dt{}\bm{U}) \xbm =
 \lambda \xbm,
\end{equation}
which can be re-arranged as
\begin{equation}
 (\bm{I} + \dt{}\bm{U}) \xbm = (\bm{I} - \dt{}\bm{L} - \dt{}\bm{D}) \lambda \xbm
\end{equation}
or written as components as
\begin{equation}
 x_i + \dt{}\sum_{j>i} a_{ij}x_j + \dt{}\lambda \sum_{j<i} a_{ij} x_j =
 \lambda x_i - \dt{}\lambda a_{ii}x_i
\end{equation}
for $i = 1,\dots,N$. Application of the triangular inequality and division by
$|x_i|$ leads to the relation
\begin{equation} \label{eqn:app:eq0}
 1 + \dt{}\sum_{j>i} |a_{ij}| \frac{|x_j|}{|x_i|} +
 \dt{}|\lambda| \sum_{j<i} |a_{ij}| \frac{|x_j|}{|x_i|} \geq
 |\lambda| |1 - \dt{} a_{ii}|.
\end{equation}
The assumption of strictly diagonal dominance and negative diagonal entries leads to
the following bound
\begin{equation} \label{eqn:app:ineq1}
 |\lambda| |1 - \dt{} a_{ii}| =
 |\lambda| (1 + \dt{} |a_{ii}|) >
 |\lambda| (1 + \dt{} \sum_{j\neq i} |a_{ij}|).
\end{equation}
On the other hand, if $i = \arg\max_{1\leq j\leq n} |x_j|$, (\ref{eqn:app:eq0})
leads to
\begin{equation} \label{eqn:app:ineq2}
 1 + \dt{}\sum_{j>i} |a_{ij}| + \dt{}|\lambda| \sum_{j<i} |a_{ij}|   \geq  |\lambda| |1 - \dt{} a_{ii}| .
\end{equation}
Combining (\ref{eqn:app:ineq1}) and (\ref{eqn:app:ineq2}), we arrive at
\begin{equation}
 1 + \dt{} \sum_{j>i}|a_{ij}| > |\lambda|(1 + \dt{} \sum_{j>i} |a_{ij}|), 
\end{equation}
which leads to the desired
result
\begin{equation}
 \rho(\bm{C}) < 1
\end{equation}
and confirms that, under the stated assumptions, the SDC1 scheme is unconditionally stable.

\begin{remark} The condition can be relaxed as $\bm{A}$ is diagonally
dominant and irreducible with non-positive diagonal entries,
when you notice for any $|\lambda | \geq 1$,
if $\bm{A}$ is diagonally dominant and irreducible, then $\lambda(\bm{I} - \dt{}\bm{L} - \dt{}\bm{D}) - (\bm{I} + \dt{}\bm{U})$
is diagonally dominant and irreducible~\cite{bagnara1995unified}.
\end{remark}
}

\section{Applications}
\label{sec: app}

In this section, we present numerical results from a variety of multiphysics
systems for the proposed high-order, partitioned spectral deferred
correction solver. To demonstrate the high-order accuracy of the solver, we
consider a system of ODEs and the time-dependent
advection-diffusion-reaction equations. To test the robustness and applicability
of the method, we consider two fluid-structure interaction problems, including
both incompressible flows and compressible flows.

\subsection{Ordinary differential equations system}
\label{SEC: APP ODE}
{
In this section, we study the proposed high-order partitioned solvers on a stiff $2 \times 2$ system of linear ODEs
\begin{equation} \label{EQ: ODE SYS}
\dot\stvc = \bm{A} \stvc \,, \qquad
\bm{A} =
\begin{bmatrix}
 0 & 1 \\
 -\alpha & -\alpha - 1
\end{bmatrix}\,, \qquad
\ubm = \begin{bmatrix} \stvc[1] \\ \stvc[2] \end{bmatrix},
\end{equation}
with initial condition $\stvc(0) = (x_0,\,0)^{T}$ and consider the time
domain $t \in (0,\,20]$.
The eigenvalues of $\bm{A}$ are $-1$ and $-\alpha$, therefore, when $\alpha \gg 0$, the
system is very stiff. The exact solution at
any time $t$ is
\begin{equation}
\begin{bmatrix} \stvc[1](t) \\ \stvc[2](t) \end{bmatrix} =
\begin{bmatrix} x_0\Big(-\frac{1}{\alpha-1}\exp^{-\alpha t} + \frac{\alpha}{\alpha-1}e^{-t}\Big) \\
x_0\Big(\frac{\alpha}{\alpha -1}\exp^{-\alpha t} - \frac{\alpha}{\alpha-1}e^{-t}\Big)
\end{bmatrix}.
\end{equation}
To conform to the multiphysics formulation in \cref{eq:semi-discrete},
the ODE system is treated as a coupled system with three subsystems. The
mass matrix is identity, the residual term is taken as
\begin{equation}
\res = (\cpl[1], (-\alpha - 1)\stvc[2]+\cpl[2])^T,
\end{equation}
and the coupling terms are defined as
\begin{equation}
 \cpl[1] = \stvc[2]\,,\quad
 \cpl[2] = -\alpha \stvc[1].
\end{equation}
This decomposition of the residual term is non-unique. In fact,
many other choices exist that will lead to different schemes.

The maximum stable time steps $\Delta t_{\max}$ of monolithic forward Euler approach and the partitioned SDC1 approach~\footnote{A detailed analytical comparison of the monolithic forward Euler scheme, the partitioned SDC1 scheme, and the fixed point iteration scheme
on this model ODE problem is presented in \Cref{sec: proof}.} with respect to $\alpha$ are $\min\{\frac{2}{\alpha}, 2\}$ and $\frac{\alpha + 1 + \sqrt{(\alpha + 1)^2 + 4\alpha}}{\alpha}$. It is worth mentioning the maximum stable time step of SDC1 is independent of $\alpha$ and always greater than 1, which indicates our scheme is stable even some fast physical time scales are unresolved.

To test the stability and verify the temporal convergence of the partitioned schemes, we take $\alpha = 1000$ and $x_0 = 1000$.
Solutions till $t = 20.0$ obtained by these partitioned SDC schemes introduced in \cref{sec:sdc_schemes} with $\Delta t = 1.0$ are depicted in \cref{FIG: ODE SOL},
which indicates all schemes are stable.
The order of accuracy is quantified via the $L_\infty$-norm of the error in the numerical
solution at time $t = 20.0$
\begin{equation}
 e_\text{ODE} = \max_{1\leq i \leq2} |\pstp[i]{N} - \stvc[i](20)|,
\end{equation}
where $\stvc[i](20)$ is the exact solution at $t = 20.0$ and $\pstp[i]{N}$ the
numerical solution at the final time step for the $i$th subsystem. The
error $e_\text{ODE}$ as a function of the time step size for different SDC schemes are
shown in \cref{FIG: ODE 3FIELD}. The design order of
accuracy is achieved. It is worth mentioning that SDC3-r with modified low-order approximation can still lead to third order accuracy
as discussed in \cref{sec:sdc-accuracy}, but the error is slightly larger than that of SDC3-l scheme.

\begin{figure}[ht]
\centering
 \includegraphics[width=0.45\textwidth]{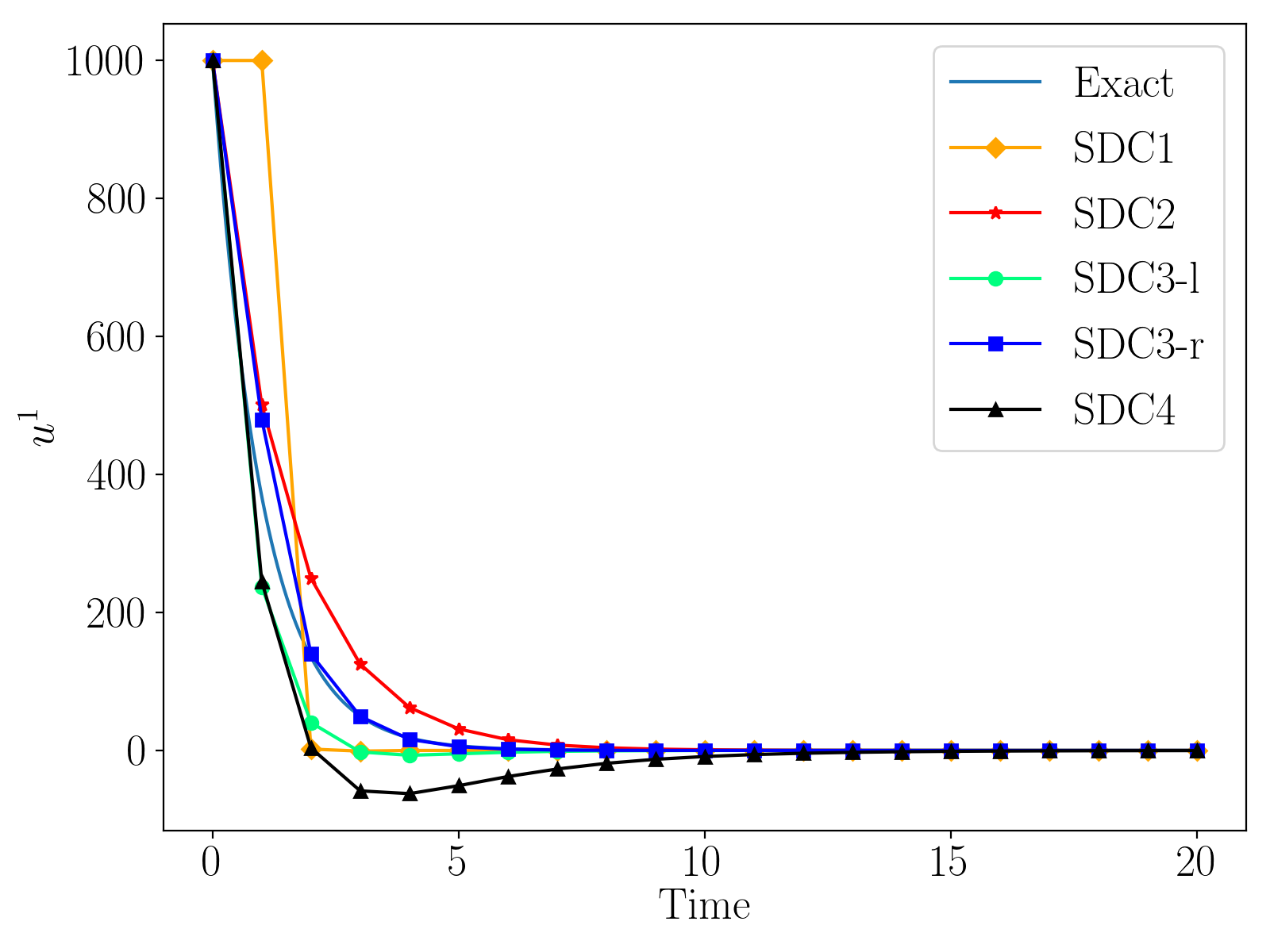} \quad
 \includegraphics[width=0.45\textwidth]{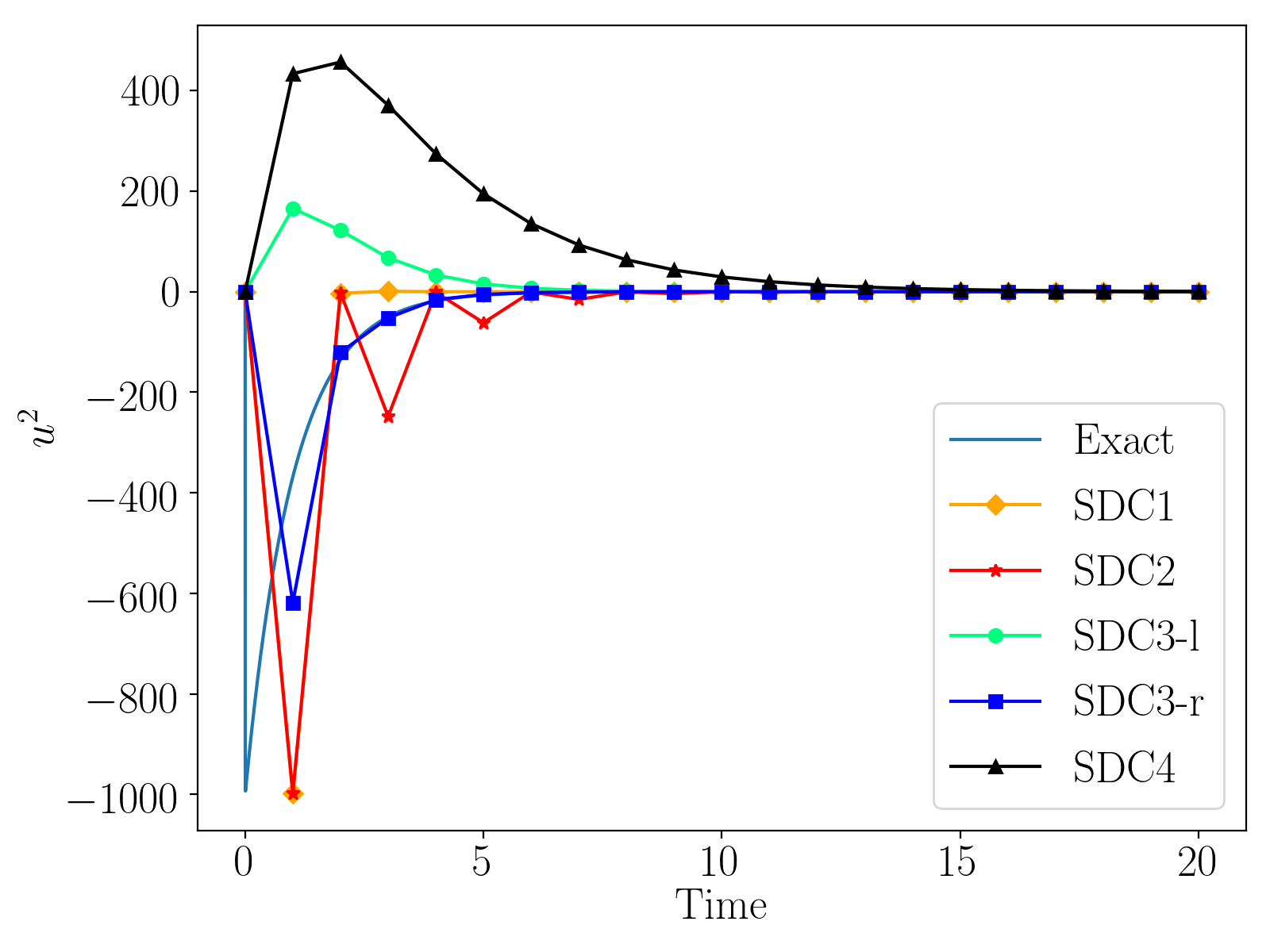}
 \caption{Solutions of the ODE system~($\alpha = 1000$) obtained by partitioned SDC schemes with $\Delta t = 1.0$.}
 \label{FIG: ODE SOL}
\end{figure}

\begin{figure}
  \centering
  \begin{tikzpicture}
\begin{loglogaxis}[
    width=0.45\textwidth,
    height=0.45\textwidth,
    xlabel={Time step ($\Delta t$)},
    ylabel={~~}]

\addplot [orange, solid, thick, mark=*, mark size=1, mark options={solid}]  table[x index=0, y index=1] {ode_3fields_GS.dat};\label{line:ode_3fields:sdc1}
\addplot [red, solid, thick, mark=square*, mark size=1, mark options={solid}]  table[x index=0, y index=2] {ode_3fields_GS.dat};\label{line:ode_3fields:sdc2}
\addplot [green, solid, thick, mark=otimes*, mark size=1, mark options={solid}]  table[x index=0, y index=4] {ode_3fields_GS.dat};\label{line:ode_3fields:sdc3-r}
\addplot [blue, solid, thick, mark=triangle*, mark size=1, mark options={solid}]  table[x index=0, y index=3] {ode_3fields_GS.dat};\label{line:ode_3fields:sdc3-l}
\addplot [black, solid, thick, mark=diamond*, mark size=1, mark options={solid}]  table[x index=0, y index=5] {ode_3fields_GS.dat};\label{line:ode_3fields:sdc4}

\logLogSlopeTriangle{0.2}{0.05}{0.45}{1}{orange};
\logLogSlopeTriangle{0.15}{0.05}{0.30}{2}{red};
\logLogSlopeTriangle{0.15}{0.05}{0.24}{3}{green};
\logLogSlopeTriangle{0.15}{0.05}{0.18}{3}{blue};
\logLogSlopeTriangle{0.2}{0.05}{0.12}{4}{black};

\end{loglogaxis}

\end{tikzpicture}
 \caption{Convergence of the
          SDC1 (\ref{line:ode_3fields:sdc1}),
          SDC2 (\ref{line:ode_3fields:sdc2}),
          SDC3-l (\ref{line:ode_3fields:sdc3-l}),
          SDC3-r (\ref{line:ode_3fields:sdc3-r}), and
          SDC4 (\ref{line:ode_3fields:sdc4}) schemes applied to the ODE system.}
 \label{FIG: ODE 3FIELD}
\end{figure}
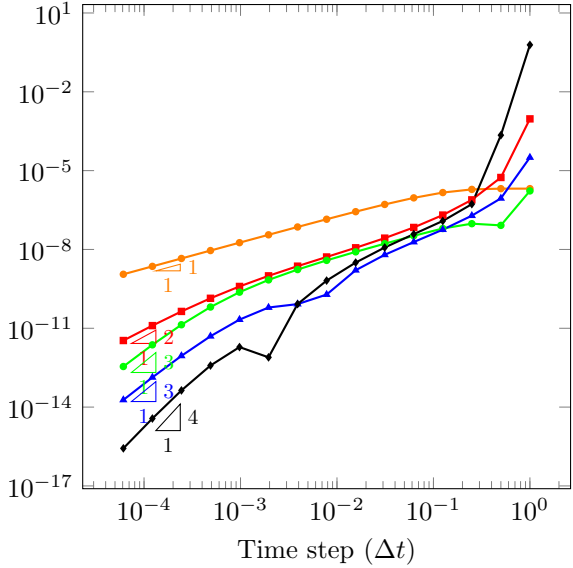
}

\subsection{Advection-diffusion-reaction system}
\label{SEC: APP ADR}
In this section, we consider time-dependent coupled advection-diffusion-reaction
(ADR) systems. These systems have applications in the modeling of chemical reactions
\cite{tezduyar1986discontinuity}, the description for superconductivity of
liquids \cite{estep2000estimating}, and biological predator-prey models
\cite{estep2000using}.
{
In this work, we consider the $2$-dimensional predator-prey model from \cite{estep2000using},
which involves $2$ coupled systems.
The governing equation for the $i$th species is
\begin{equation} \label{EQ: CLAW ADR}
  \pder{u^i}{t} + (v^i \cdot \nabla) u^i - \nabla\cdot( D^i \nabla u^i) =
  f^i(u,\,x,\,t), \quad (x,\,t) \in \Omega \times (0,\,1],
  \quad i = 1, 2.
\end{equation}
Here $u^1$ denotes the prey, $u^2$ denotes the predator, $u = [u^1, u^2]$, $\Omega \subset \Rbb^2$
is the computational domain. $D^i$ is the diffusivity, and $v^i(x) \in \Rbb^2$ is the velocity field for the $i$th species.
The reaction terms are
\begin{equation}
 \begin{aligned}
  f^1(u,\,x,\,t) = u^1(-(u^1 - a^1)(u^1 - 1) - a^2u^2)\,\qquad
  f^2(u,\,x,\,t) = u^2(-a^3 - a^4u^2 + a^2 u^1),
  \label{EQ: PREDATOR PREY SOURCE}
 \end{aligned}
\end{equation}
where $a^1 = 0.25$, $a^2 = 2$, $a^3 = 1$, $a^4 = 3.4$, and the diffusivities
are constant $D^1 = D^2 = 0.01$. The computational domain is the
two-dimensional unit square $\Omega = [-0.5,\,0.5]\times[-0.5,\,0.5]$ with the
prey initially uniformly distributed, and predators initially gathered
near $(x_0,y_0) = (-0.25,-0.25)$
\begin{equation}
 \begin{aligned}
  u^1(x,\,y,\, 0) = 1.0   \quad \textrm{and} \quad  u^2(x,\,y,\,0) =
                \begin{cases}
                  0 & r > d\\
                  e^{-\frac{d^2}{d^2 - r^2}} & r \leq d\\
                \end{cases},
  \end{aligned}
\end{equation}
where $d=0.2$, $r = \sqrt{(x - x_0)^2 + (y-y_0)^2}$. The boundary conditions
are all Neumann conditions $\displaystyle{\pder{u}{n} = 0}$ and the velocity
fields are constant $v^1(x) = (0,0)$ and $v_2(x) = (0.5,0.5)$.
The equations are discretized with a standard high-order discontinuous
Galerkin method using upwind flux for the
inviscid numerical flux and the compact DG flux \cite{peraire2008compact}
for the viscous numerical flux on a $40 \times 40$ structured mesh of
quadratic simplex elements.
}
The governing equations in (\ref{EQ: CLAW ADR}) reduce to the following
system of ODEs after the DG discretization is applied
\begin{equation}
\label{EQ: ADR0}
 \mass[i]\stvcdot[i] = \res[i](\stvc[i]) +
                       \cpl[i](\stvc[1],\,\stvc[2]),
\end{equation}
where $\mass[i]$ is the fixed mass matrix, $\stvc[i](t)$ is the semi-discrete
state vector, i.e., the discretization of $u$ on $\Omega$,
$\res[i](\stvc[i])$ is the spatial discretization of the advection and
diffusion terms on $\Omega$, and $\cpl[i]$ is the coupling term
that contains the DG discretization of the $i$th reaction source term in
(\ref{EQ: PREDATOR PREY SOURCE}).
The solution of (\ref{EQ: ADR0})
using the SDC4 scheme is provided in
\cref{FIG: PREDATOR_PREY} using the time step size $\dt{}=0.1$.
The predators are diffused quickly and migrate diagonally
upward, while the prey are mostly affected by the coupled reaction near the
extent of the predator population.

\begin{figure}[ht]
\centering
 \includegraphics[width=0.3\textwidth]{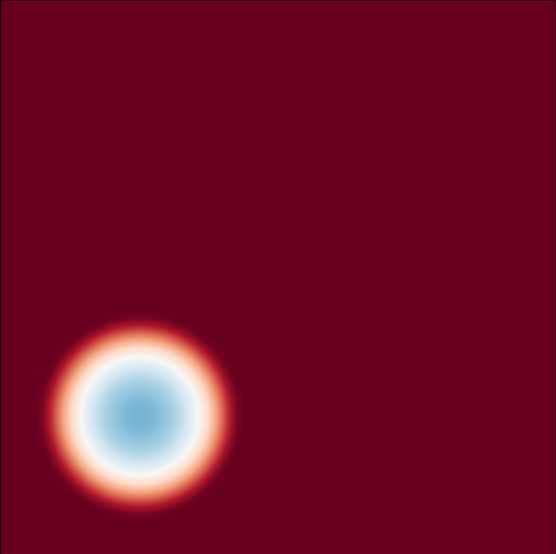} \quad
 \includegraphics[width=0.3\textwidth]{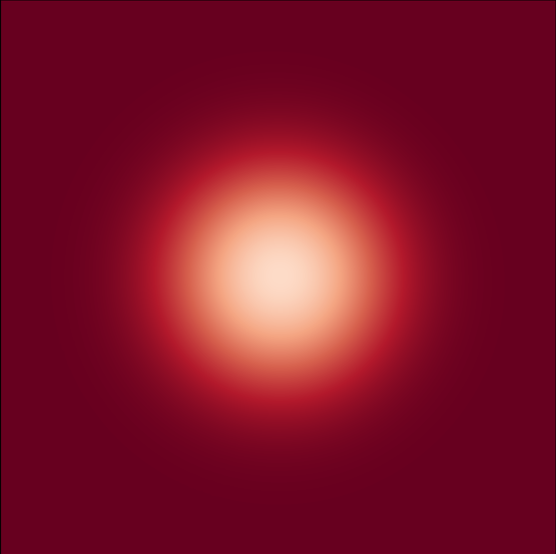} \quad
 \includegraphics[width=0.3\textwidth]{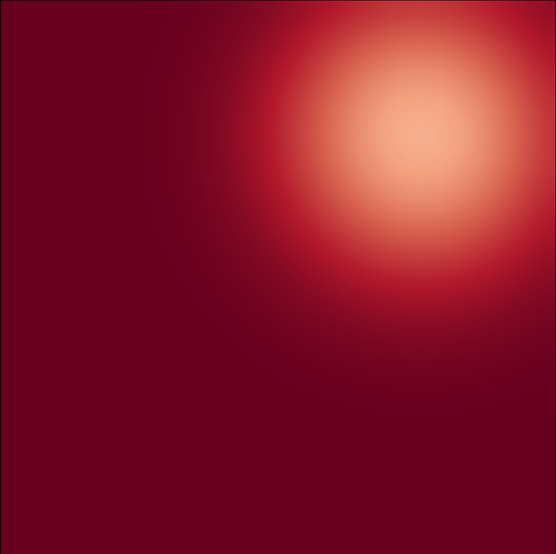} \\\vspace{2mm}
 \includegraphics[width=0.3\textwidth]{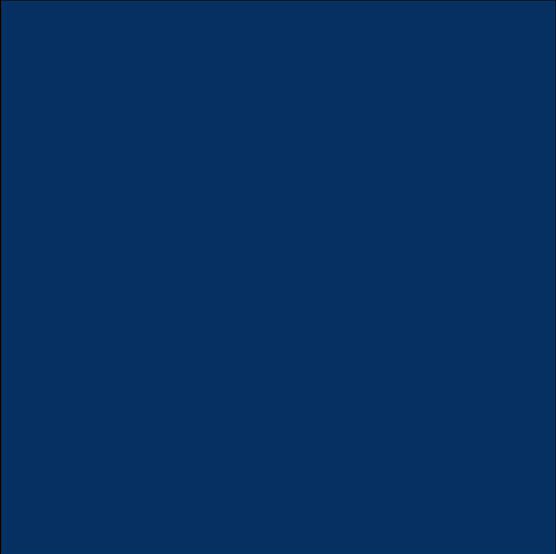} \quad
 \includegraphics[width=0.3\textwidth]{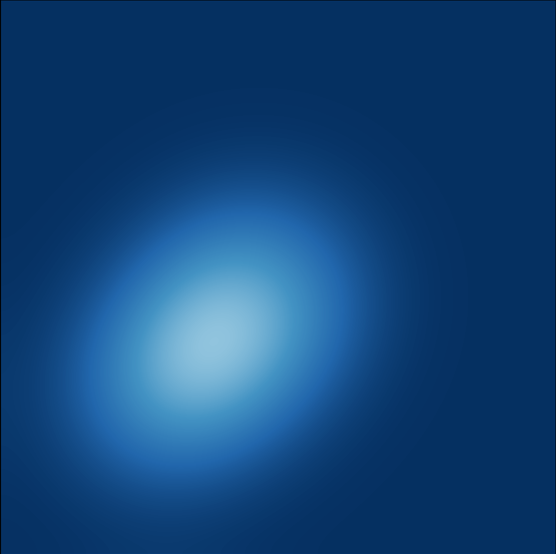} \quad
 \includegraphics[width=0.3\textwidth]{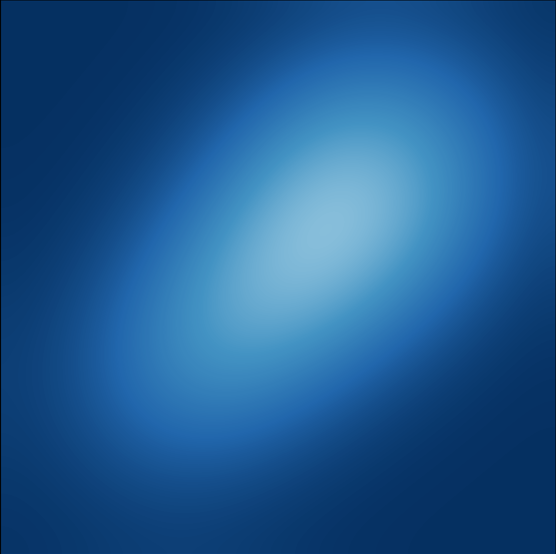}
 \caption{Predator (\emph{top}) and prey (\emph{bottom}) populations at
          various snapshots in time: $t = 0.0$ (\emph{left}), $t = 0.5$
          (\emph{center}), and $t = 1.0$ (\emph{right}).}
 \label{FIG: PREDATOR_PREY}
\end{figure}

To validate the temporal convergence of the high-order partitioned scheme, we apply
these SDC schemes introduced in \cref{sec:sdc_schemes} and, similar to the previous section,
we use the $L_{\infty}$-error between a reference solution and the numerical
solution at time $t = 1.0$
to quantify the error
where the reference solution
at $t = 1.0$ obtained by using the SDC4 scheme with
$\dt{} = 6.25 \times 10^{-3}$.
The errors as a function of the time step size are provided in
\cref{FIG:PREDATOR_PREY_ERR}, which verifies the design order
of accuracy of all SDC schemes. This
figure also shows that no stability issues were observed for any of the
results, even for the coarsest time step $\Delta t = 0.1$.
\begin{figure}
  \centering
  \begin{tikzpicture}
\begin{loglogaxis}[
    width=0.45\textwidth,
    height=0.45\textwidth,
    xlabel={Time step ($\Delta t$)},
    ylabel={~~}]

\addplot [orange, solid, thick, mark=*, mark size=1, mark options={solid}]  table[x index=0, y index=1] {predator_prey_GS.dat};\label{line:predator_prey:sdc1}
\addplot [red, solid, thick, mark=square*, mark size=1, mark options={solid}]  table[x index=0, y index=2] {predator_prey_GS.dat};\label{line:predator_prey:sdc2}
\addplot [green, solid, thick, mark=otimes*, mark size=1, mark options={solid}]  table[x index=0, y index=4] {predator_prey_GS.dat};\label{line:predator_prey:sdc3-r}
\addplot [blue, solid, thick, mark=triangle*, mark size=1, mark options={solid}]  table[x index=0, y index=3] {predator_prey_GS.dat};\label{line:predator_prey:sdc3-l}
\addplot [black, solid, thick, mark=diamond*, mark size=1, mark options={solid}]  table[x index=5, y index=6] {predator_prey_GS.dat};\label{line:predator_prey:sdc4}

\logLogSlopeTriangle{0.4}{0.1}{0.8}{1}{orange};
\logLogSlopeTriangle{0.4}{0.1}{0.6}{2}{red};
\logLogSlopeTriangle{0.4}{0.1}{0.5}{3}{green};
\logLogSlopeTriangle{0.4}{0.1}{0.3}{3}{blue};
\logLogSlopeTriangle{0.4}{0.1}{0.1}{4}{black};

\end{loglogaxis}

\end{tikzpicture}
 \caption{Convergence of the
          SDC1 (\ref{line:predator_prey:sdc1}),
          SDC2 (\ref{line:predator_prey:sdc2}),
          SDC3-l (\ref{line:predator_prey:sdc3-l}),
          SDC3-r (\ref{line:predator_prey:sdc3-r}), and
          SDC4 (\ref{line:predator_prey:sdc4}) schemes applied to the predator-prey model.}
 \label{FIG:PREDATOR_PREY_ERR}
\end{figure}
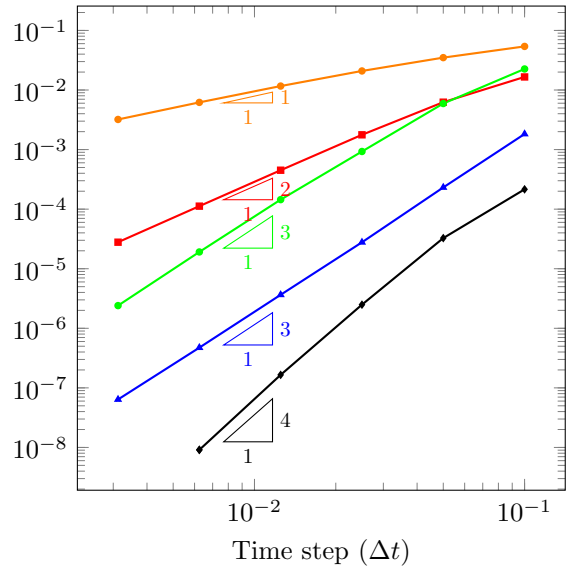

\subsection{Modified cavity problem}\label{sec: modified cavity}
In this section, we study the modified driven cavity problem with flexible
bottom\footnote{The detailed implementation of the modified cavity problem is in \url{https://zhengyu_huang@bitbucket.org/zhengyu_huang/incompressible_fsi_2d.git}.} depicted in \cref{fig:Cavity}. This problem was first introduced in~\cite{wall1999fluid} and since then
has been used as a benchmark problem for a variety of FSI
studies~\cite{forster2007artificial,kuttler2008fixed, gerbeau2003quasi,kassiotis2011nonlinear,habchi2013partitioned}. An
oscillating velocity $\bm{v}(t) = (1 - \cos(2 \pi t/5), 0)$ is imposed on the
top of the cavity.  Each side is of length $1$ containing three elements (two
unconstrained nodes) that allow free inflow and outflow of
fluid, i.e.\ homogeneous Neumann boundary conditions are imposed on these
apertures. This way the structural displacements are not constrained by the
fluid's incompressibility~\cite{kuttler2006solution}. The fluid density and
dynamic viscosity are $\rho^f = 1$ and $\mu^f = 0.01$. The structure is of
thickness $h = 0.002$ and Young's modulus $E = 250$. The density of the structure varies for
different test cases to demonstrate the stability of the coupling procedures.
Decreasing structure density increases difficulties to the coupling algorithm
since the main resistance of the structure against the fluid pressure stems from
its mass.
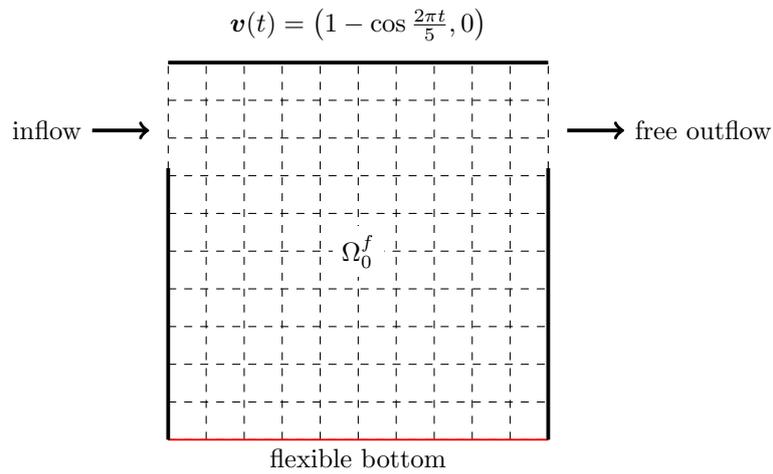
\begin{figure}
\centering
    \begin{tikzpicture}[scale=0.5]
      \foreach \i in {0,...,10}
      {
      \draw [dashed] (\i, 0)-- (\i,10);
      \draw [dashed] (0, \i)-- (10, \i);
      }
      \draw [line width=0.25mm, red] (0, 0)-- (10,0);
      \draw [line width=0.5mm] (0, 10)-- (10,10);
      \draw [line width=0.5mm] (0, 0)-- (0,7.2);
      \draw [line width=0.5mm] (10, 0)-- (10,7.2);

      \node[scale=1] at (5, -0.5) {\text{flexible bottom}};
      \draw[thick,->,line width=0.5mm,](-2.0,8.2) node[anchor=east,align=right][scale=1] {inflow} -- (-0.5,8.2) ;
      \draw[thick,->,line width=0.5mm,](10.5,8.2) -- (12.0,8.2) node[anchor=west][scale=1] {free outflow};
      \node[scale=1] at (5.0, 11.0) {$\bm{v}(t)=\big(1 - \cos\frac{2 \pi t}{5}, 0\big)$};

      \node[scale=1,fill=white] at (5,5) {$\Omega_0^f$};

    \end{tikzpicture}
\caption{Schematics of the modified cavity problem.}
\label{fig:Cavity}
\end{figure}

The considered Newtonian fluid is governed by the incompressible Navier-Stokes equations~\cite{forster2006geometric}, written on the undeformed fluid domain $\Omega^{f}_0$,
\begin{equation}
 \begin{aligned}
\frac{\partial \bm{v}}{\partial t} + ( \bm{v} - \dot{\bm{d}}^x) : \nabla_x \bm{v} - \nabla_x : \frac{\bm{\sigma}^f}{\rho^f} &= 0 \qquad&&\textrm{in}~\Omega_0^f, \\
\nabla_x  \cdot  \bm{v} &= 0 \qquad&&\textrm{in}~\Omega^f_0,\\
\bm{v}(t) &= \bm{v}_D(t) \qquad&&\textrm{in}~\Gamma^f_{0D}\\
\bm{\sigma}^f \cdot \bm{N} &= 0 &\qquad&\textrm{in}~\Gamma^f_{0N}.\\
 \end{aligned}
\label{eq:incompressible_ns}
\end{equation}
Here $\bm{v}$ denotes the fluid velocity field, $\dot{\bm{d}}^x$ denotes the mesh velocity, $\bm{\sigma}^f = -\mu^f(\nabla_x\bm{u} + \nabla_x\bm{u}^T) + p\mathbb{I}$ denotes the stress tensor,
and $p$ denotes the pressure field.  The time derivative in \cref{eq:incompressible_ns} describes the temporal change of velocity on a reference point while all spatial derivatives refer to the deformed domain. Dirichlet boundaries are imposed on both top and bottom, and two side walls denoted as $\Gamma^f_{0D}$, and homogeneous Neumann boundary conditions are imposed on both apertures, with outward normal $\bm{N}$ and ambient pressure $0$.

The governing equations in (\ref{eq:incompressible_ns}) is semi-discretized by $32 \times 32$ traditional Taylor-Hood $Q_2$-$Q_1$ mixed elements,
i.e.\ continuous biquadratic velocity and continuous bilinear pressure, which satisfies the Babu\v{s}ka-Brezzi condition~\cite[p.~286]{donea2003finite}.
This leads to the following system of ODEs,
\begin{equation}
\label{EQ: FSI3 FLUID COUPLING}
 \mass[f]\stvcdot[f] = \res[f](\stvc[f],\,\cpl[f]),
\end{equation}
where $\mass[f]$ is the fixed mass matrix, $\stvc[f](t)$ is the semi-discrete
fluid state vector, i.e.\ the discretization of $\bm{v}$ and $p$ on $\Omega^f(t)$,
$\res[f](\stvc[f],\,\cpl[f])$ denotes the spatial discretization of the
inviscid and viscous fluxes on $\Omega^f_0$, and $\cpl[f]$ is the coupling term
that contains information about the mesh position $\bm{d}^{x}$ and velocities $\dot{\bm{d}}^{x}$.
It is worth mentioning that the mass matrix $\mass[f]$ is singular due to the incompressibility constraints,
which causes the added mass effect instability \cite{causin2005added, forster2007artificial,van2009added} for incompressible flows.

The flexible bottom, with fixed ends, is modeled as the nonlinear beam written in the Lagrangian form in the undeformed domain~$\Omega_0^s$.
The equilibrium equation of the beam in the weak form can be written as~\cite[p.~308]{de2012nonlinear}
\begin{equation}
\int_{\Omega^s_0} \rho^s \ddot{\bm{d}}^s \delta \bm{d}^s dX  + \int_{\Omega^s_0} \sigma_{11}^{s} \delta \epsilon_{11}^{s}dX = \bm{f}_{ext}\delta\bm{d}^s.
\end{equation}
Here the first term is the contribution of the inertial forces, the second and third terms represent the virtual work of the internal forces and the external forces. $\sigma_{11}^{s}$ and $\epsilon_{11}^{s}$ are nonlinear axial strain and axial stress components, related by the linear elasticity constitutive relation.

The flexible bottom is discretized by $32$ beam elements with linear shape functions for the horizontal displacement
and cubic Hermitian shape functions for the vertical displacement.
The discretized equation becomes
\begin{equation}
\mass[s]\stvcdot[s] = \res[s](\stvc[s],\,\cpl[s]),
\end{equation}
where $\mass[s]$ is the fixed mass matrix, $\stvc[s](t)$ is the semi-discrete
state vector consisting of the displacements and velocities of the
beam nodes, $\res[s](\stvc[s],\,\cpl[s])$ is the spatial discretization
of the virtual work and boundary conditions on the reference domain
$\Omega^s_0$, and $\cpl[s]$ is the coupling term that contains information about
the flow load on the structure.

{ To determine the deformation of the fluid mesh, the mesh is} considered as a linear pseudo-structure~\cite{farhat1995mixed, farhat1998torsional} driven solely
by Dirichlet boundary conditions provided by the displacement of the structure
at the fluid-structure interface. The governing equations are given by the
continuum mechanics equations in the Lagrangian form with the linear elastic constitutive relation in the undeformed fluid
domain $\Omega^x_0$,
\begin{equation}
 \begin{aligned}
  \rho^{x}\ddot{\bm{d}}^x - \nabla_X : \bm{\sigma}^x &= 0
    \qquad\qquad &&\text{in}~\Omega^x_0, \\
  \bm{d}^x &= \bm{d}_D(t)
    \qquad\qquad &&\text{on}~\partial \Omega^x_{0D}, \\
  \dot{\bm{d}}^x &= \dot{\bm{d}}_D(t)
    \qquad\qquad &&\text{on}~\partial \Omega^x_{0D},
 \end{aligned}
\label{EQ: FSI3 MESH}
\end{equation}
where $\rho^x =500$ is the density, and $\bm{\sigma}^{x}$ is the Cauchy stress tensor.
The position and velocity of the fluid domain are
prescribed along $\partial \Omega^x_{0D}$, the union of the fluid-structure
interface and the fluid domain boundary.

The governing equations given by (\ref{EQ: FSI3 MESH}) are discretized with $32
\times 32$ biquadratic elements and reduced to the following system of ODEs,
\begin{equation}
 \Mbm^x\dot\ubm^x = \rbm^x(\ubm^x,\,\cbm^x),
\end{equation}
where $\mass[x]$ is the fixed mass matrix, $\stvc[x]$ is the semi-discrete
state vector consisting of the displacements and velocities of the
mesh nodes, $\res[m](\stvc[x],\,\cpl[x])$ is the spatial discretization
of the continuum equations and boundary conditions on the reference domain
$\Omega^x$, and $\cpl[x]$ is the coupling term that contains information about
the motion of the fluid structure interface.

Finally, we obtain the three-field coupled fluid-structure equations
\begin{equation}
\label{EQ: FSI3 SEMI DISC}
\mass[s]\stvcdot[s] = \res[s](\stvc[s],\,\cpl[s]), \quad
\mass[x]\stvcdot[x] = \res[x](\stvc[x],\,\cpl[x]) , \quad
\mass[f]\stvcdot[f] = \res[f](\stvc[f],\,\cpl[f]).
\end{equation}
The coupling terms have the following dependencies
\begin{equation}
\cpl[s] = \cpl[s](\stvc[s],\,\stvc[x],\,\stvc[f]), \quad
\cpl[x] = \cpl[x](\stvc[s]), \quad
\cpl[f] = \cpl[f](\stvc[s],\, \stvc[x]).
\label{EQ: FSI3 COUPLE}
\end{equation}
The ordering of the subsystems implied in
\cref{EQ: FSI3 SEMI DISC} is used throughout
the remainder of this section, which plays an important role when
defining the Gauss-Seidel predictors---only a single predictor $\tilde{\bm c}^s$ is needed to decouple the multiphysics system.
The conservative load and motion transfer algorithms~\cite{farhat1998load} are applied to evaluate these coupling terms.

\begin{figure}
 \includegraphics[width=0.3\textwidth]{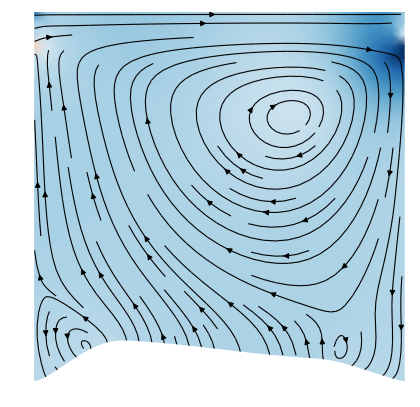} \quad
 \includegraphics[width=0.3\textwidth]{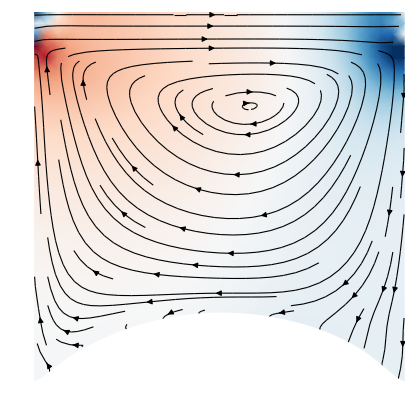} \quad
 \includegraphics[width=0.3\textwidth]{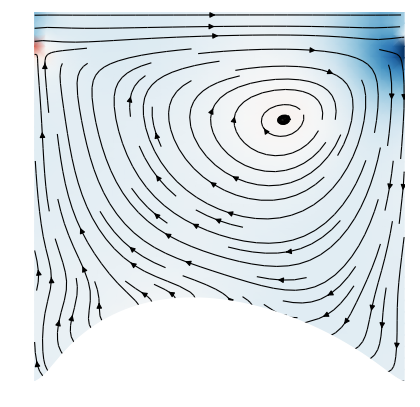} \\
 \includegraphics[width=0.3\textwidth]{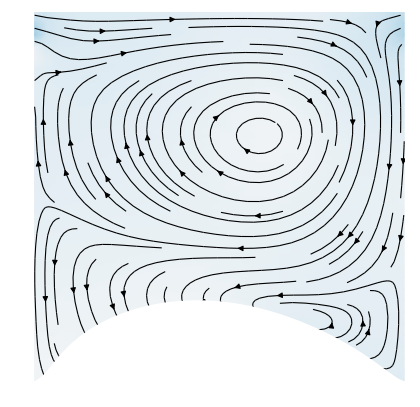} \quad
 \includegraphics[width=0.3\textwidth]{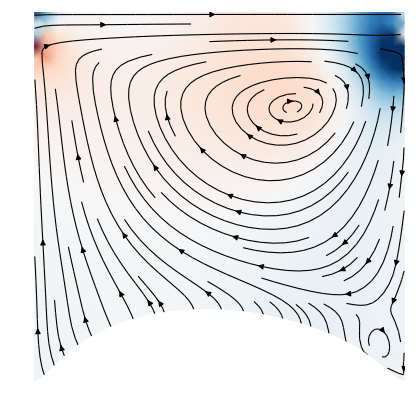} \quad
 \includegraphics[width=0.3\textwidth]{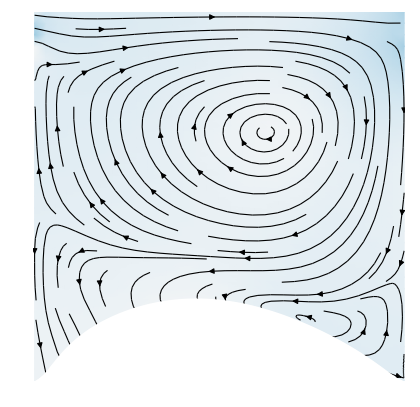}
 \caption{Driven cavity pressure field and velocity streamlines t=$4$, $22$, $44$, $65$, $83$, $100$
          (\emph{left}-to-\emph{right}, \emph{top}-to-\emph{bottom}).}
 \label{fig: cavity pressure streamlines}
\end{figure}

The aforementioned SDC solvers are
applied to this benchmark problem,
 The time step is fixed to be $0.1$, the simulation time is $T = 100$~($20$ periods),
 and the Young's modulus and Poisson's ratio of the pseudo-structure are set to be $250$ and $0.0$. The flow is initially quiescent with
 0 pressure, the same as the ambient pressure.
Snapshots of the pressure field and the streamlines are shown in~\cref{fig: cavity pressure streamlines}, with the structure density $\rho^s = 500$.
The flexible bottom undergoes large deformations and oscillates along with the prescribed periodic velocity at the top.
To understand the stability of the proposed partitioned solvers, we vary the density of the structure $\rho^s$ by multiples of one hundred.
The minimal structure density that leads to a stable simulation for different SDC 
schemes are reported in \cref{tab:cavityflow_density}.  The corresponding vertical displacements of the central point on the flexible bottom are depicted in
\cref{fig: cavity disp}, no spurious oscillations are observed, which indicates numerical stability.
Here the backward Euler scheme~(BE) is from \cite{forster2007artificial},
which solves the incompressible flow by using the backward Euler scheme, and the structure with generalized-$\alpha$ time integration scheme~\cite{chung1993time,forster2007artificial}. Its sequentially staggered algorithm~{(no iterations)} is equipped with a first order structure displacement predictor,
which is the most stable partitioned solver reported in \cite{forster2007artificial}.  Its minimal stable structure density in the present setup is $900$, which outperforms the
SDC1 scheme, thanks to the improved numerical dissipation from the generalized-$\alpha$ time integration scheme.
However, SDC2 and SDC3-r schemes are stable with $\rho^s = 500$ and  $\rho^s = 400$, which demonstrates the superior stability of the proposed high order partitioning solvers. And SDC3-r scheme is the most stable scheme for this test case, which demonstrates the possibility to improve stability through judiciously choosing the low order approximation $\mathcal{C}_j$.
Moreover,  it is worth mentioning that IMEX based high order partitioned solvers~\cite{Huang2019} can not
handle this case, due to the singular fluid mass matrix; more comparisons will be presented in \cref{sec: foil damper}.

\begin{table}
\centering
\begin{tabular}{c|cccccc}
\toprule[1.5pt]
Method    & BE & SDC1 &  SDC2 & SDC3-l & SDC3-r & SDC4\\
\hline
$\rho^s$   & 900 & 1200 &  500 & 800  &  400  & 1000    \\
\bottomrule[1.5pt]
\end{tabular}
\caption{Minimum structure density to maintain the stability of the modified cavity flow problem for different schemes.}\label{tab:cavityflow_density}
\end{table}

\begin{figure}
\centering
 \includegraphics[width=0.72\textwidth]{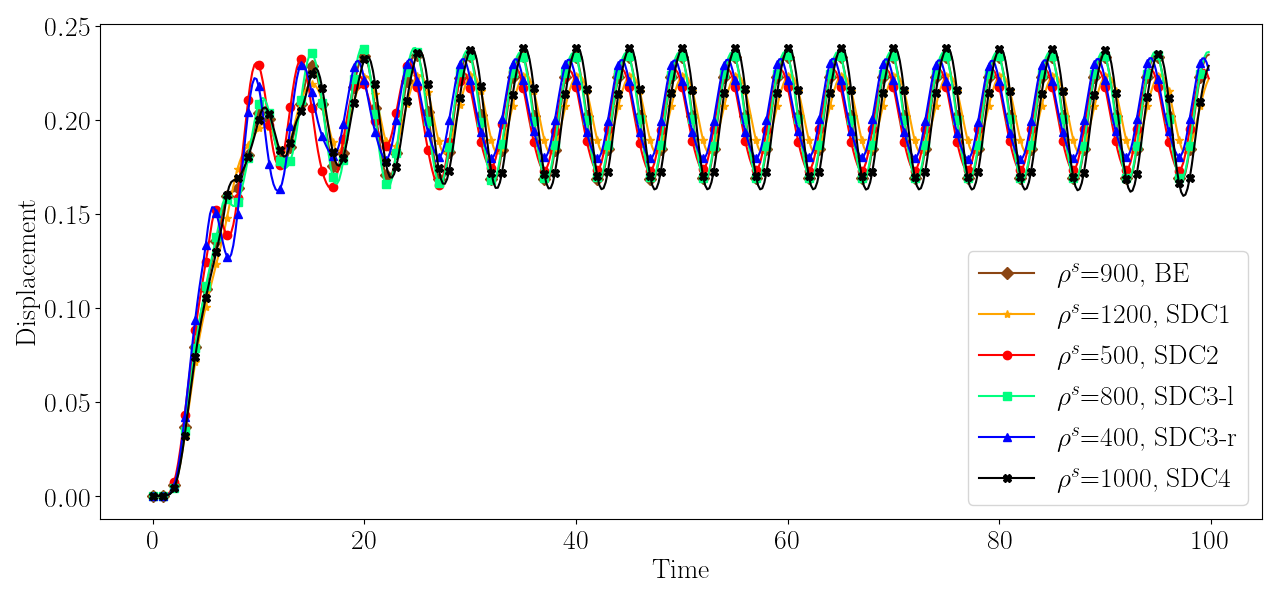} \quad
 \caption{Displacement of the central point at the flexible bottom of the cavity for different schemes at the minimum stable density.}
 \label{fig: cavity disp}
\end{figure}

\subsection{Foil damper problem}
\label{sec: foil damper}
In this section, we demonstrate the proposed SDC solvers on the energy-harvesting model problem \cite{peng2009energy, zahr2016adjoint, Huang2019}.
Consider the foil-damper system
in~\cref{FIG: FOIL DAMPER} suspended in an isentropic, viscous
flow where the rotational motion is a prescribed periodic motion
$\theta(t) = \frac{\pi}{4}\cos(2\pi f t)$ with frequency $f = 0.2$ and
the vertical displacement is determined by balancing the forces exerted
on the airfoil by fluid and damper.

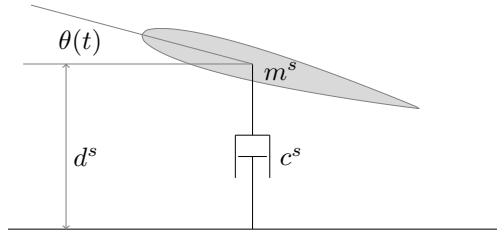
\begin{figure}
 \centering
 \begin{tikzpicture}
\begin{axis}[
axis equal,
axis lines=none,
width=8cm,
ymax=0.6,
xmax=0.85,
xmin=-0.85,
ymin=-0.6]
\addplot [white, forget plot]
coordinates {
( -0.85000000,  -0.60000000)
(  0.85000000,  -0.60000000)
(  0.85000000,   0.60000000)
( -0.85000000,   0.60000000)
( -0.85000000,  -0.60000000)};

\draw [smooth, ultra thin] plot coordinates {(axis cs:-1.000000, -0.580000) (axis cs:1.000000, -0.580000)
};

\draw [smooth, ultra thin, opacity=0.5, fill=black!30!white] plot coordinates {(axis cs:-0.385791, 0.103372) (axis cs:-0.371603, 0.117293) (axis cs:-0.360141, 0.121044) (axis cs:-0.349136, 0.123087) (axis cs:-0.338373, 0.124230) (axis cs:-0.327767, 0.124784) (axis cs:-0.317276, 0.124909) (axis cs:-0.306874, 0.124701) (axis cs:-0.296545, 0.124224) (axis cs:-0.286276, 0.123522) (axis cs:-0.276059, 0.122625) (axis cs:-0.265887, 0.121559) (axis cs:-0.255756, 0.120343) (axis cs:-0.245660, 0.118992) (axis cs:-0.235597, 0.117520) (axis cs:-0.225564, 0.115935) (axis cs:-0.215559, 0.114249) (axis cs:-0.205579, 0.112467) (axis cs:-0.195623, 0.110598) (axis cs:-0.185688, 0.108646) (axis cs:-0.175775, 0.106617) (axis cs:-0.165880, 0.104516) (axis cs:-0.156004, 0.102346) (axis cs:-0.146146, 0.100111) (axis cs:-0.136304, 0.097816) (axis cs:-0.126477, 0.095462) (axis cs:-0.116665, 0.093053) (axis cs:-0.106868, 0.090591) (axis cs:-0.097083, 0.088079) (axis cs:-0.087312, 0.085518) (axis cs:-0.077553, 0.082912) (axis cs:-0.067806, 0.080262) (axis cs:-0.058070, 0.077570) (axis cs:-0.048345, 0.074838) (axis cs:-0.038630, 0.072066) (axis cs:-0.028925, 0.069258) (axis cs:-0.019230, 0.066413) (axis cs:-0.009544, 0.063535) (axis cs:0.000133, 0.060623) (axis cs:0.009802, 0.057679) (axis cs:0.019462, 0.054704) (axis cs:0.029114, 0.051700) (axis cs:0.038759, 0.048667) (axis cs:0.048396, 0.045606) (axis cs:0.058026, 0.042519) (axis cs:0.067649, 0.039405) (axis cs:0.077266, 0.036267) (axis cs:0.086876, 0.033105) (axis cs:0.096480, 0.029920) (axis cs:0.106077, 0.026711) (axis cs:0.115669, 0.023481) (axis cs:0.125255, 0.020230) (axis cs:0.134836, 0.016958) (axis cs:0.144411, 0.013666) (axis cs:0.153981, 0.010355) (axis cs:0.163546, 0.007025) (axis cs:0.173106, 0.003676) (axis cs:0.182662, 0.000310) (axis cs:0.192212, -0.003074) (axis cs:0.201758, -0.006475) (axis cs:0.211300, -0.009893) (axis cs:0.220837, -0.013327) (axis cs:0.230370, -0.016776) (axis cs:0.239899, -0.020241) (axis cs:0.249424, -0.023722) (axis cs:0.258945, -0.027217) (axis cs:0.268462, -0.030727) (axis cs:0.277975, -0.034251) (axis cs:0.287484, -0.037789) (axis cs:0.296989, -0.041342) (axis cs:0.306491, -0.044908) (axis cs:0.315989, -0.048488) (axis cs:0.325484, -0.052082) (axis cs:0.334974, -0.055688) (axis cs:0.344462, -0.059309) (axis cs:0.353946, -0.062942) (axis cs:0.363426, -0.066589) (axis cs:0.372903, -0.070248) (axis cs:0.382376, -0.073921) (axis cs:0.391845, -0.077607) (axis cs:0.401312, -0.081306) (axis cs:0.410774, -0.085018) (axis cs:0.420234, -0.088743) (axis cs:0.429689, -0.092482) (axis cs:0.439141, -0.096234) (axis cs:0.448590, -0.099999) (axis cs:0.458034, -0.103778) (axis cs:0.467475, -0.107571) (axis cs:0.476913, -0.111377) (axis cs:0.486346, -0.115198) (axis cs:0.495776, -0.119033) (axis cs:0.505202, -0.122882) (axis cs:0.514624, -0.126747) (axis cs:0.524042, -0.130626) (axis cs:0.533456, -0.134520) (axis cs:0.542865, -0.138430) (axis cs:0.552271, -0.142355) (axis cs:0.561672, -0.146297) (axis cs:0.571069, -0.150255) (axis cs:0.579809, -0.156664) (axis cs:0.569688, -0.155410) (axis cs:0.559571, -0.154139) (axis cs:0.549458, -0.152852) (axis cs:0.539350, -0.151549) (axis cs:0.529246, -0.150230) (axis cs:0.519146, -0.148896) (axis cs:0.509051, -0.147546) (axis cs:0.498959, -0.146182) (axis cs:0.488871, -0.144802) (axis cs:0.478787, -0.143409) (axis cs:0.468707, -0.142001) (axis cs:0.458631, -0.140579) (axis cs:0.448558, -0.139143) (axis cs:0.438489, -0.137693) (axis cs:0.428424, -0.136230) (axis cs:0.418363, -0.134753) (axis cs:0.408304, -0.133263) (axis cs:0.398250, -0.131760) (axis cs:0.388199, -0.130243) (axis cs:0.378152, -0.128713) (axis cs:0.368108, -0.127170) (axis cs:0.358067, -0.125615) (axis cs:0.348030, -0.124046) (axis cs:0.337997, -0.122463) (axis cs:0.327967, -0.120868) (axis cs:0.317941, -0.119260) (axis cs:0.307918, -0.117638) (axis cs:0.297899, -0.116003) (axis cs:0.287883, -0.114354) (axis cs:0.277871, -0.112691) (axis cs:0.267863, -0.111015) (axis cs:0.257859, -0.109325) (axis cs:0.247858, -0.107621) (axis cs:0.237861, -0.105902) (axis cs:0.227868, -0.104169) (axis cs:0.217879, -0.102420) (axis cs:0.207895, -0.100657) (axis cs:0.197914, -0.098878) (axis cs:0.187938, -0.097083) (axis cs:0.177966, -0.095271) (axis cs:0.167998, -0.093444) (axis cs:0.158035, -0.091599) (axis cs:0.148077, -0.089737) (axis cs:0.138123, -0.087857) (axis cs:0.128174, -0.085958) (axis cs:0.118231, -0.084041) (axis cs:0.108292, -0.082104) (axis cs:0.098359, -0.080148) (axis cs:0.088432, -0.078170) (axis cs:0.078510, -0.076172) (axis cs:0.068594, -0.074151) (axis cs:0.058684, -0.072108) (axis cs:0.048781, -0.070041) (axis cs:0.038883, -0.067951) (axis cs:0.028993, -0.065835) (axis cs:0.019109, -0.063694) (axis cs:0.009233, -0.061526) (axis cs:-0.000636, -0.059330) (axis cs:-0.010497, -0.057106) (axis cs:-0.020351, -0.054852) (axis cs:-0.030196, -0.052567) (axis cs:-0.040033, -0.050251) (axis cs:-0.049860, -0.047901) (axis cs:-0.059679, -0.045516) (axis cs:-0.069488, -0.043096) (axis cs:-0.079286, -0.040639) (axis cs:-0.089075, -0.038143) (axis cs:-0.098853, -0.035606) (axis cs:-0.108619, -0.033028) (axis cs:-0.118374, -0.030405) (axis cs:-0.128116, -0.027737) (axis cs:-0.137845, -0.025020) (axis cs:-0.147561, -0.022253) (axis cs:-0.157263, -0.019434) (axis cs:-0.166950, -0.016559) (axis cs:-0.176622, -0.013626) (axis cs:-0.186277, -0.010632) (axis cs:-0.195914, -0.007573) (axis cs:-0.205534, -0.004446) (axis cs:-0.215134, -0.001246) (axis cs:-0.224713, 0.002031) (axis cs:-0.234270, 0.005390) (axis cs:-0.243804, 0.008837) (axis cs:-0.253312, 0.012379) (axis cs:-0.262793, 0.016024) (axis cs:-0.272244, 0.019780) (axis cs:-0.281662, 0.023658) (axis cs:-0.291044, 0.027670) (axis cs:-0.300386, 0.031833) (axis cs:-0.309683, 0.036165) (axis cs:-0.318928, 0.040691) (axis cs:-0.328112, 0.045442) (axis cs:-0.337224, 0.050464) (axis cs:-0.346246, 0.055818) (axis cs:-0.355154, 0.061600) (axis cs:-0.363904, 0.067971) (axis cs:-0.372413, 0.075244) (axis cs:-0.380465, 0.084223) (axis cs:-0.385791, 0.103372)
};

\draw [smooth, ultra thin] plot coordinates {(axis cs:-0.050000, -0.325000) (axis cs:0.050000, -0.325000)
};

\draw [smooth, ultra thin] plot coordinates {(axis cs:0.000000, -0.580000) (axis cs:0.000000, -0.325000)
};

\draw [smooth, ultra thin] plot coordinates {(axis cs:-0.060000, -0.250000) (axis cs:-0.060000, -0.400000)
};

\draw [smooth, ultra thin] plot coordinates {(axis cs:-0.060000, -0.250000) (axis cs:0.060000, -0.250000)
};

\draw [smooth, ultra thin] plot coordinates {(axis cs:0.060000, -0.250000) (axis cs:0.060000, -0.400000)
};

\draw [smooth, ultra thin] plot coordinates {(axis cs:0.000000, -0.250000) (axis cs:0.000000, 0.000000)
};

\node[]    at    (axis cs:0.1, -0.025) {$m^s$};
\node[]    at    (axis cs:0.135, -0.325) {$c^s$};
\draw [smooth, solid, gray, ultra thin] plot coordinates {(axis cs:0.000600, 0.000000) (axis cs:-0.799400, 0.000000)
};

\draw [smooth, solid, gray, ultra thin] plot coordinates {(axis cs:0.000580, -0.000155) (axis cs:-0.772161, 0.206900)
};

\node[]    at    (axis cs:-0.6, 0.075) {$\theta(t)$};
\node[]    at    (axis cs:-0.58, -0.325) {$d^s$};
\draw [smooth, solid, gray, <->, ultra thin] plot coordinates {(axis cs:-0.650000, -0.580000) (axis cs:-0.650000, 0.000000)
};

\end{axis}
\end{tikzpicture}
 \caption{Schematics of the foil-damper system.} \label{FIG: FOIL DAMPER}
\end{figure}

The considered Newtonian fluid is governed by the compressible Navier-Stokes equations, defined on a
deformable fluid domain $\Omega^f(t)$, which can be written as a viscous conservation law
\begin{equation}
\label{EQ: FSI GOVERN}
\frac{\partial U}{\partial t} + \nabla_x \cdot \Fcal^{inv}(U) + \nabla_x \cdot \Fcal^{vis}(U, \nabla_x U)= 0 \quad \text{in} \quad \Omega^f(t),
\end{equation}
where $U$ is the conservative state variable vector and the physical
flux consists of an inviscid part $\Fcal^{inv}(U)$ and a viscous part
$\Fcal^{vis}(U,\,\nabla_x U)$,

\begin{equation}
U =
\begin{bmatrix}
\rho^f   \\
\rho^f \bm{v} \\
E
\end{bmatrix},\,
\Fcal^{inv}(U) =
\begin{bmatrix}
\rho^f\bm{v} \\
\rho^f \bm{v}\otimes\bm{v} + p\mathbb{I}\\
(E + p)\bm{v}
\end{bmatrix},\,\textrm{ and }
\Fcal^{vis}(U, \nabla U) =
\begin{bmatrix}
0   \\
\bm{\tau} \\
\bm{\tau}\cdot\bm{v} - \bm{q}
\end{bmatrix},\,
\end{equation}
here $\rho^f$ is the fluid density, $\bm{v}$ is the velocity, and $E$ is the total energy per unit volume.
The stress tensor and the heat flux are given by
\begin{equation}
\bm{\tau} = -\frac{2}{3}\mu^f(\nabla \cdot \bm{v})\mathbb{I} + \mu^f(\nabla \bm{v} + \nabla\bm{v}^T), \quad
\bm{q} = -\kappa \nabla T,
\end{equation}
where $\mu^f$ is the dynamic viscosity, and $\kappa$ is the thermal conductivity, and $T$ is the temperature.
The isentropic assumption states
the entropy of the system is assumed constant, which is tantamount to the flow
being adiabatic and reversible. For a perfect gas, the entropy is defined as
\begin{equation}\label{eqn:entropy}
  s = p/(\rho^f)^\gamma,
\end{equation}
here $\gamma$ is the specific heat ratio.

The conservation law in (\ref{EQ: FSI GOVERN})
is transformed to a fixed reference domain $\Omega^f_0$ by defining a
time-dependent diffeomorphism $\Gcal$ between the reference domain and
physical domain; see~\cref{FIG: DOM MAP}. At each time $t$, a point
$X$ in the reference domain $\Omega^f_0$ is mapped to $x(X,t) = \Gcal(X,t)$
in the physical domain $\Omega^f(t)$.
\begin{figure}
  \centering
  \includegraphics[width=2.5in]{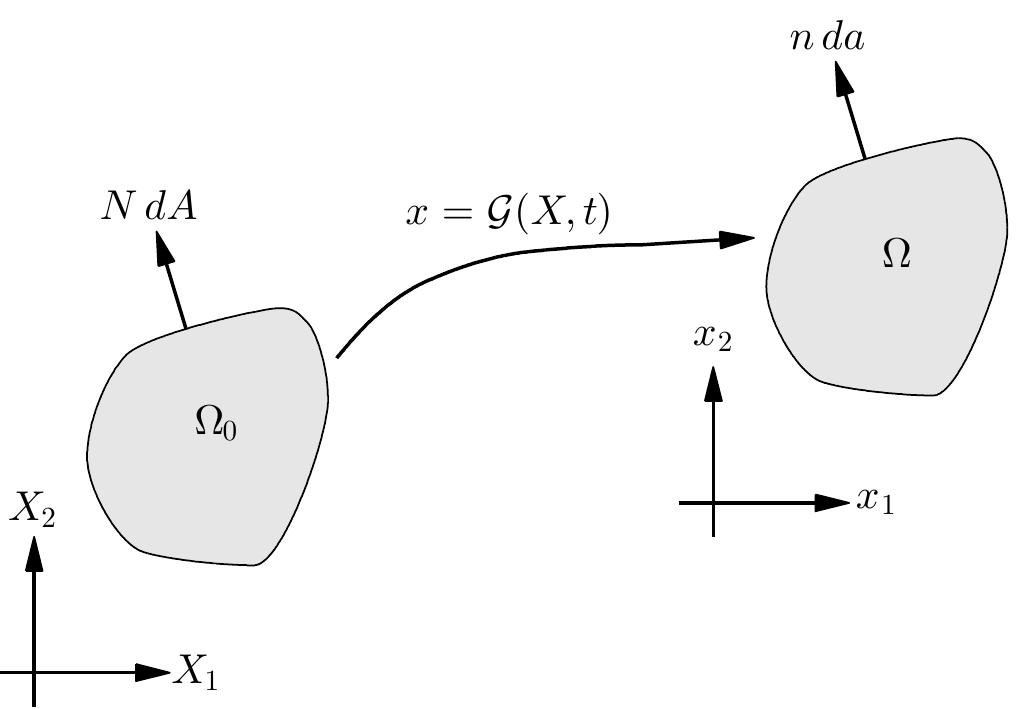}
  \caption{Mapping between reference and physical domains.}
  \label{FIG: DOM MAP}
\end{figure}
The deformation gradient $G$, velocity $v_G$, and Jacobian $g$ of
the mapping are defined as
\begin{equation}
 G = \nabla_X\Gcal\,,\quad
 v_G = \pder{\Gcal}{t}\,,\quad
 g = \det G.
\end{equation}
Following the procedure in \cite{persson2009discontinuous, zahr2016adjoint},
the governing equation
(\ref{EQ: FSI GOVERN}) can be written in the reference domain as
\begin{equation}\label{EQ: FSI GOVERN TRANSF}
\frac{\partial U_X}{\partial t} + \nabla_X \cdot \Fcal_X^{inv}(U_X) +
\nabla_X\cdot \Fcal_X^{vis}(U_X, \nabla_X U_X)= 0
\quad \text{in} \quad \Omega_0^f,
\end{equation}
where $\nabla_X$ defines the spatial derivative with respect to the reference
domain, conserved quantities and its derivatives in the reference domain are
written as
\begin{equation}
 U_X = gU\,,\qquad
 \nabla_X U_X = g\nabla_x U \cdot G + g^{-1}U_X \frac{\partial g}{\partial X}.
\end{equation}
The inviscid and viscous fluxes are transformed to the reference domain as
\begin{equation}
\begin{aligned}
 \Fcal_X^{inv}(U_X) &= g\Fcal^{inv}(g^{-1} U_X)G^{-T} -
                       gU_X \mathbin{\mathop{\otimes}} v_G G^{-T}, \\
 \Fcal_X^{vis}(U_X) &= g\Fcal^{vis}\left(g^{-1} U_X,
                       g^{-1}\left[\nabla_X U_X -
                       g^{-1} U_X \pder{g}{X}\right]G^{-1}\right)G^{-T}.
 \end{aligned}
\end{equation}

The governing equations given by (\ref{EQ: FSI GOVERN TRANSF}) are discretized with a standard high-order discontinuous
Galerkin method using Roe's flux \cite{roe1981approximate} for the inviscid
numerical flux and the compact DG flux \cite{peraire2008compact} for the
viscous numerical flux. The DG discretization uses a mesh consisting of $3912$
cubic simplex elements, and leads to the following
system of ODEs
\begin{equation}
\label{EQ: FSI3 FLUID COUPLING COMP}
 \mass[f]\stvcdot[f] = \res[f](\stvc[f],\,\cpl[f]),
\end{equation}
where $\mass[f]$ is the fixed mass matrix, $\stvc[f](t)$ is the semi-discrete
fluid state vector, i.e., the discretization of $U_X$ on $\Omega_0^f$,
$\res[f](\stvc[f],\,\cpl[f])$ is the spatial discretization of the transformed
inviscid and viscous fluxes on $\Omega^f_0$, and $\cpl[f]$ is the coupling term
that contains information about the domain mapping $\Gcal(X,\,t)$. In
particular, the coupling term contains the position and velocities of the
nodal coordinates of the computational mesh.
{{The domain mapping is defined using a nodal (Lagrangian) basis.}}

The foil is modeled as a simple mass-spring-damper system
that can directly be written as a second-order system of ODEs, with respect to the vertical displacement $d^s$, as follows,
\begin{equation} \label{EQ: SIMP STRUCT}
 m^s\ddot{d}^s + c^s\dot{d}^s + k^s d^s = f_{ext}(t),
\end{equation}
where $m^s$ is the mass of the (rigid) object, $c^s = 1$ is the damper resistance
constant, $k^s = 0$ is the spring stiffness, and $f_{ext}(t)$ is a time-dependent
external load, which will be given by integrating the pointwise force the
fluid exerts on the object. This simple structure allows us to study the
stability and accuracy properties of the proposed high-order partitioned
solver for this class of multiphysics problems without the distraction of
transferring solution fields across the fluid-structure interface.

To conform to the notation in this document and encapsulate the
semi-discretization of PDE-based structure models, the equation in
(\ref{EQ: SIMP STRUCT}) is re-written in a first-order form as
\begin{equation}
 \mass[s]\stvcdot[s] = \res[s](\stvc[s],\,\cpl[s]).
\end{equation}
In the case of the simple structure in (\ref{EQ: SIMP STRUCT}), the mass
matrix, state vector, residual, and coupling term are
\begin{equation}
 \mass[s] = \begin{bmatrix} m^s & \\ & 1 \end{bmatrix}, \qquad
 \stvc[s] = \begin{bmatrix} \dot{d}^s \\ d^s \end{bmatrix}, \qquad
 \cpl[s] = f_{ext}, \qquad
 \res[s](\stvc[s],\,\cpl[s]) =
              \begin{bmatrix} f_{ext}-c^s\dot{d}^s-k^s d^s \\ \dot{d}^s \end{bmatrix}.
\label{EQ: FSI3 STRUCT COUPLING}
\end{equation}

The motion of the fluid mesh is described as a blending map \cite{persson2009discontinuous}. That is, the domain mapping
$x = \Gcal(X,\,t)$ is given by an analytical function, parametrized by the
deformation and velocity of the fluid-structure interface, that can be
analytically differentiated to obtain the deformation gradient $G(X,\,t)$
and velocity $v_G(X,\,t)$. Since the fluid mesh motion is no longer included
in the system of time-dependent partial differential equations, this leads to
a two-field FSI formulation in terms of the fluid and structure states only.

In the two-field FSI setting
\begin{equation}
\label{EQ: FSI2 SEMI DISC}
\mass[s]\stvcdot[s] = \res[s](\stvc[s],\,\cpl[s]), \quad
\mass[f]\stvcdot[f] = \res[f](\stvc[f],\,\cpl[f]),
\end{equation}
the mesh motion is given by an analytical function and the coupling terms have
the following dependencies
\begin{equation} \label{EQ: FSI2 COUPLE}
\cpl[s] = \cpl[s](\stvc[s],\,\stvc[f]), \quad
\cpl[f] = \cpl[f](\stvc[s]).
\end{equation}
In this case, the structure coupling term is determined from the fluid and
structure state since the external force depends on the traction integrated
over the fluid-structure interface. The fluid coupling term, i.e., the
position and velocity of the fluid mesh, is determined from the structure
state. Finally, the ordering is chosen as in~\cref{sec: modified cavity}, and only a single predictor $\tilde{\bm c}^s$ is needed to
decouple the multiphysics system.

Snapshots of the vorticity
field and motion of the airfoil are shown in~\cref{FIG: FOIL DAMPER VORT}
for a single configuration of the fluid-structure system.
\begin{figure}
 \includegraphics[width=0.3\textwidth]{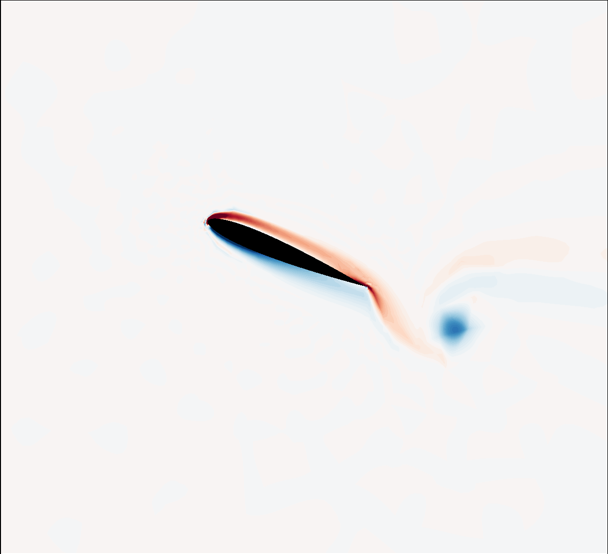} \quad
 \includegraphics[width=0.3\textwidth]{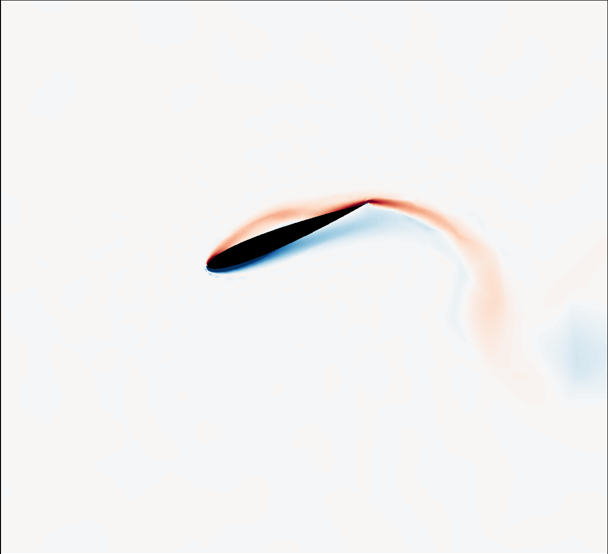} \quad
 \includegraphics[width=0.3\textwidth]{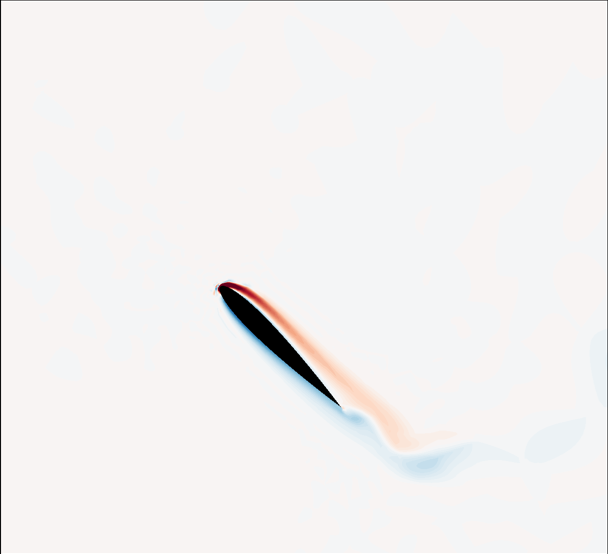} \\
 \includegraphics[width=0.3\textwidth]{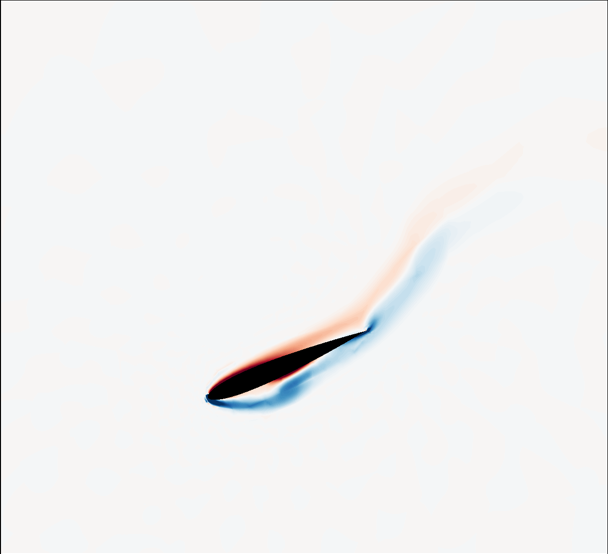} \quad
 \includegraphics[width=0.3\textwidth]{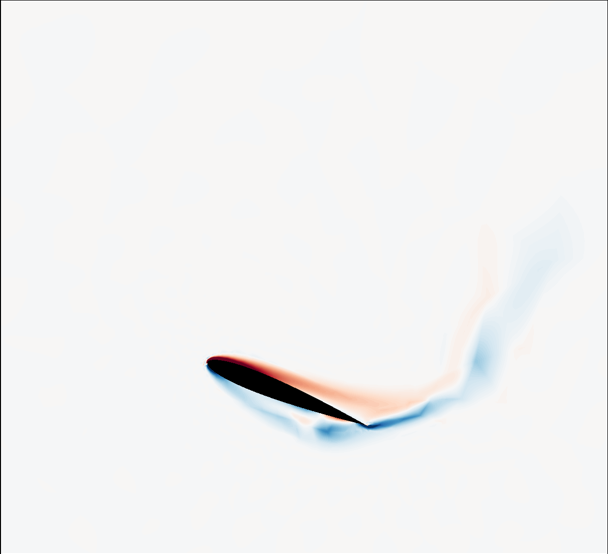} \quad
 \includegraphics[width=0.3\textwidth]{{nacamsh1ref0p3_Re1000_Mridx0_Nrefdt02_SDC4_frame0200.vort}.png}
 \caption{Airfoil motion and flow vorticity corresponding to foil-damper
          system under prescribed rotational motion
          $\theta(t) = \frac{\pi}{4}\cos(2\pi f t)$ with frequency $f = 0.2$
          at various snapshots in time:
          $t = 0.83,\,1.67,\,2.5,\,3.33,\,4.17,\,5.0$
          (\emph{left}-to-\emph{right}, \emph{top}-to-\emph{bottom}) with $m^s$ = 1, $\Delta t$ = 0.025, and SDC4 scheme.}
 \label{FIG: FOIL DAMPER VORT}
\end{figure}
Our numerical experiments study the stability of the proposed SDC partitioned schemes as a function of the mass ratio between the structure and fluid,
an important parameter that can impact the stability of partitioned solvers
as identified in \cite{causin2005added, van2007higher}, and the time step size for
SDC schemes up to fourth order. The mass ratio, $\bar{m}$, considered is the
ratio of the mass of the structure, $m^s$, to the mass of fluid displaced by the
structure, $\rho^f A$, where $A = 0.08221$ is the area of the airfoil. Since the isentropic Navier-Stokes
equations can be seen as an artificial compressibility formulation for
the incompressible Navier-Stokes equations
\cite{lin1995incompressible, desjardins1999incompressible}, we consider the
reference fluid density to be constant and equal to the freestream density $\rho^f = 1$. Variations in
the mass ratio are achieved by varying the mass of the structure with all
other parameters fixed. The stability results are summarized in
~\cref{FIG: FOIL DAMPER STAB} where ~\ref{line:foil_damper_stable}~
indicates a $(\Delta t,\,\bar{m})$-pair that leads to a stable simulation and
~\ref{line:foil_damper_unstable}~ leads to an unstable one. The corresponding
foil vertical displacements for these blue dots adjacent to these unstable red dots
are depicted in~\cref{FIG: FOIL DAMPER DISP}; no trail of unstable oscillation appears.

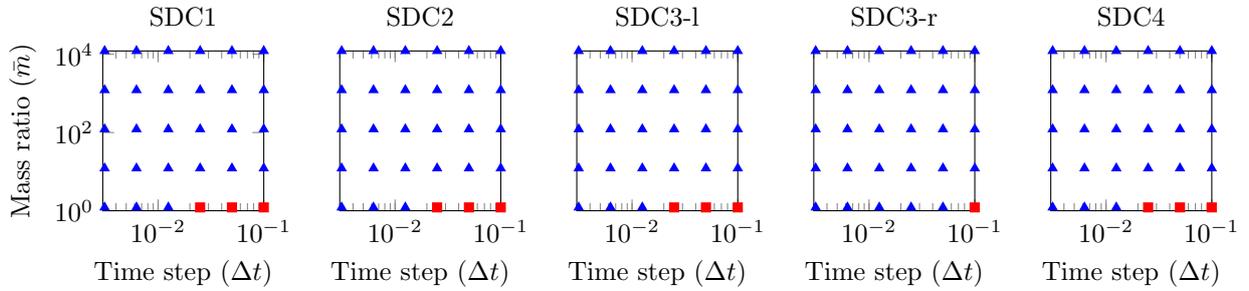
\begin{figure}
 \centering
 \begin{tikzpicture}
\begin{groupplot}[
    group style={
        group name=conv plots,
        group size=5 by 1,
        horizontal sep=1.0cm
    },
    xmode=log, ymode=log,
    xmin=0.003, xmax=0.1,
    ymin=1, ymax=1000.0/0.08221,
    width=3.7cm,
    height=3.7cm,
]

\nextgroupplot[ylabel={Mass ratio $(\bar{m})$}, xlabel={Time step ($\Delta t$)},title=SDC1]
 \addplot [blue, solid, thick, mark=triangle*, mark size=1.5, only marks, mark options={solid}]  table[x index=1, y expr=\thisrowno{0}/0.08221] {nacamsh1ref0p3_Re1000_SDC1.stable.dat};
 \label{line:foil_damper_stable}
 \addplot [red, solid, thick, mark=square*, mark size=1.5, only marks, mark options={solid}]  table[x index=1, y expr=\thisrowno{0}/0.08221] {nacamsh1ref0p3_Re1000_SDC1.unstable.dat};
\label{line:foil_damper_unstable}

\nextgroupplot[xlabel={Time step ($\Delta t$)}, ytick=\empty,title=SDC2]
 \addplot [blue, solid, thick, mark=triangle*, mark size=1.5, only marks, mark options={solid}]  table[x index=1, y expr=\thisrowno{0}/0.08221] {nacamsh1ref0p3_Re1000_SDC2.stable.dat};
 \addplot [red, solid, thick, mark=square*, mark size=1.5, only marks, mark options={solid}]  table[x index=1, y expr=\thisrowno{0}/0.08221] {nacamsh1ref0p3_Re1000_SDC2.unstable.dat};

\nextgroupplot[xlabel={Time step ($\Delta t$)}, ytick=\empty,title=SDC3-l]
 \addplot [blue, solid, thick, mark=triangle*, mark size=1.5, only marks, mark options={solid}]  table[x index=1, y expr=\thisrowno{0}/0.08221] {nacamsh1ref0p3_Re1000_SDC3l.stable.dat};
 \addplot [red, solid, thick, mark=square*, mark size=1.5, only marks, mark options={solid}]  table[x index=1, y expr=\thisrowno{0}/0.08221] {nacamsh1ref0p3_Re1000_SDC3l.unstable.dat};

\nextgroupplot[xlabel={Time step ($\Delta t$)}, ytick=\empty,title=SDC3-r]
 \addplot [blue, solid, thick, mark=triangle*, mark size=1.5, only marks, mark options={solid}]  table[x index=1, y expr=\thisrowno{0}/0.08221] {nacamsh1ref0p3_Re1000_SDC3r.stable.dat};
 \addplot [red, solid, thick, mark=square*, mark size=1.5, only marks, mark options={solid}]  table[x index=1, y expr=\thisrowno{0}/0.08221] {nacamsh1ref0p3_Re1000_SDC3r.unstable.dat};

\nextgroupplot[xlabel={Time step ($\Delta t$)}, ytick=\empty,title=SDC4]
 \addplot [blue, solid, thick, mark=triangle*, mark size=1.5, only marks, mark options={solid}]  table[x index=1, y expr=\thisrowno{0}/0.08221] {nacamsh1ref0p3_Re1000_SDC4.stable.dat};
 \addplot [red, solid, thick, mark=square*, mark size=1.5, only marks, mark options={solid}]  table[x index=1, y expr=\thisrowno{0}/0.08221] {nacamsh1ref0p3_Re1000_SDC4.unstable.dat};

\end{groupplot}
\end{tikzpicture}
 \caption{Behavior of the predictor-based partitioned schemes for a range
          of mass ratios and time steps for SDC1, SDC2, SDC3-l, SDC3-r, and SDC4 (\emph{left} to
          \emph{right}) schemes. \emph{Legend}: \ref{line:foil_damper_stable}
          indicates a stable simulation and \ref{line:foil_damper_unstable}
          indicates an unstable simulation.}
 \label{FIG: FOIL DAMPER STAB}
\end{figure}
\begin{figure}
 \centering
 \includegraphics[width=0.45\textwidth]{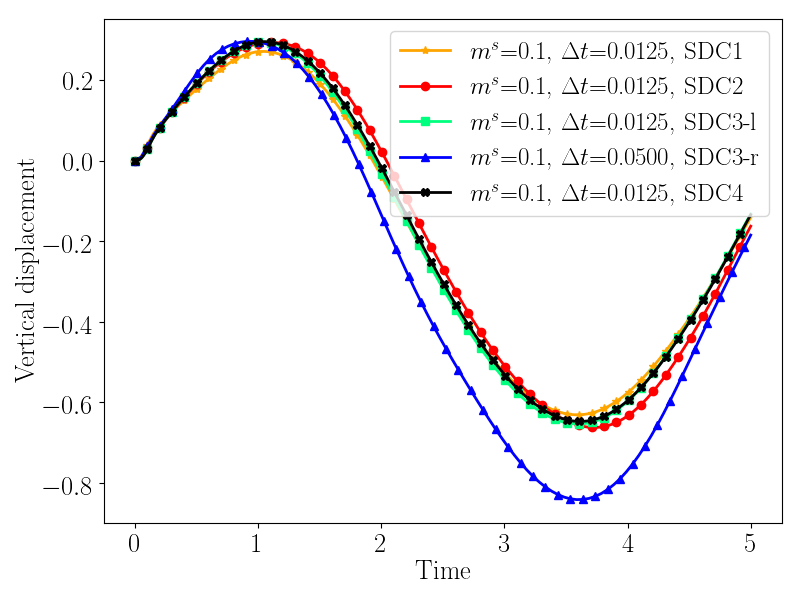} \quad
 \includegraphics[width=0.45\textwidth]{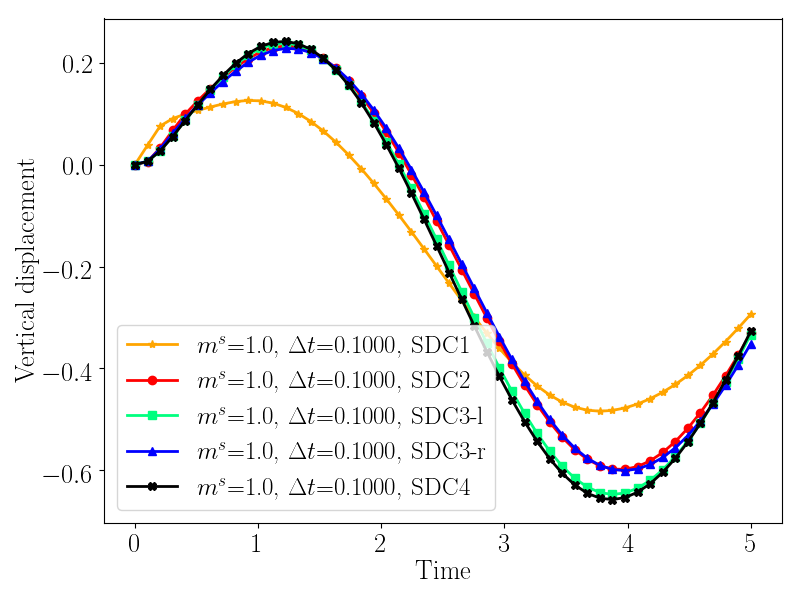}
 \caption{Vertical displacements of the foil for different SDC schemes at the minimum stable structure mass.}
 \label{FIG: FOIL DAMPER DISP}
\end{figure}

Figure~\ref{FIG: FOIL DAMPER STAB} also shows that all schemes are stable once
the time step is sufficiently small, at least for this range of mass ratios
considered.
The SDC3-r scheme is the most robust scheme, which is the same as the result in \cref{sec: modified cavity}.
Moreover, by comparing with the IMEX based partition solvers in \cite{Huang2019}, which is reproduced in \cref{FIG:IMEX-FOIL-DAMPER-STAB},
we conclude the SDC-based partition schemes are more robust for all temporal orders.
Finally, the efficiency of these two arbitrarily high-order partitioned solvers is also studied, in terms of the number of implicit solvings,
which are listed in \cref{tab:efficiency_sdc_imex}.  The IMEX schemes are more efficient than SDC schemes in terms of implicit solves; however, potential improvement for SDC
would be parallel-in-time evaluation, using an algorithm such as PFASST \cite{Emmett2012}.

\begin{figure}
 \centering
 \begin{tikzpicture}
\begin{groupplot}[
    group style={
        group name=conv plots,
        group size=4 by 4,
        horizontal sep=1.0cm
    },
    xmode=log, ymode=log,
    xmin=0.003, xmax=0.1,
    ymin=1, ymax=1000.0/0.08221,
    width=4.5cm,
    height=4.5cm,
]

\nextgroupplot[ylabel={Mass ratio $(\bar{m})$}, xlabel={Time step ($\Delta t$)},title=IMEX1]
 \addplot [blue, solid, thick, mark=triangle*, mark size=1.5, only marks, mark options={solid}]  table[x index=1, y expr=\thisrowno{0}/0.08221] {nacamsh1ref0p3_Re1000_IMEX1.stable.dat};
 \label{line:foil_damper_stable_imex}
 \addplot [red, solid, thick, mark=square*, mark size=1.5, only marks, mark options={solid}]  table[x index=1, y expr=\thisrowno{0}/0.08221] {nacamsh1ref0p3_Re1000_IMEX1.unstable.dat};
\label{line:foil_damper_unstable_imex}

\nextgroupplot[xlabel={Time step ($\Delta t$)}, ytick=\empty,title=IMEX2]
 \addplot [blue, solid, thick, mark=triangle*, mark size=1.5, only marks, mark options={solid}]  table[x index=1, y expr=\thisrowno{0}/0.08221] {nacamsh1ref0p3_Re1000_IMEX2.stable.dat};
 \addplot [red, solid, thick, mark=square*, mark size=1.5, only marks, mark options={solid}]  table[x index=1, y expr=\thisrowno{0}/0.08221] {nacamsh1ref0p3_Re1000_IMEX2.unstable.dat};

\nextgroupplot[xlabel={Time step ($\Delta t$)}, ytick=\empty,title=IMEX3]
 \addplot [blue, solid, thick, mark=triangle*, mark size=1.5, only marks, mark options={solid}]  table[x index=1, y expr=\thisrowno{0}/0.08221] {nacamsh1ref0p3_Re1000_IMEX3.stable.dat};
 \addplot [red, solid, thick, mark=square*, mark size=1.5, only marks, mark options={solid}]  table[x index=1, y expr=\thisrowno{0}/0.08221] {nacamsh1ref0p3_Re1000_IMEX3.unstable.dat};

\nextgroupplot[xlabel={Time step ($\Delta t$)}, ytick=\empty,title=IMEX4]
 \addplot [blue, solid, thick, mark=triangle*, mark size=1.5, only marks, mark options={solid}]  table[x index=1, y expr=\thisrowno{0}/0.08221] {nacamsh1ref0p3_Re1000_IMEX4.stable.dat};
 \addplot [red, solid, thick, mark=square*, mark size=1.5, only marks, mark options={solid}]  table[x index=1, y expr=\thisrowno{0}/0.08221] {nacamsh1ref0p3_Re1000_IMEX4.unstable.dat};

\end{groupplot}
\end{tikzpicture}
 \caption{Behavior of the predictor-based partitioned schemes for a range
          of mass ratios and time steps for IMEX1-IMEX4 (\emph{left} to
          \emph{right}) schemes with the weak Gauss-Seidel predictor. \emph{Legend}: \ref{line:foil_damper_stable_imex}
          indicates a stable simulation and \ref{line:foil_damper_unstable_imex}
          indicates an unstable simulation~\cite{Huang2019}.}
 \label{FIG:IMEX-FOIL-DAMPER-STAB}
\end{figure}
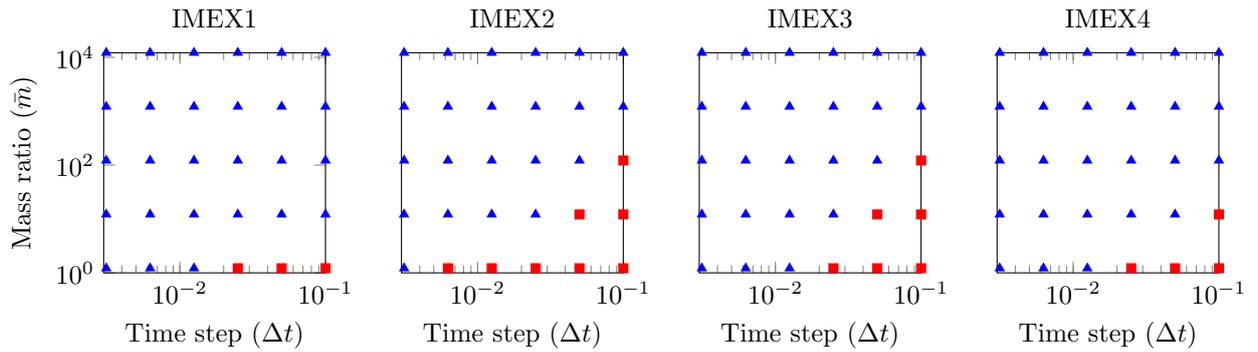

\begin{table}
\begin{tabular}{c|c|cccc|cccc}
\toprule[1.5pt]
Method    & BE & SDC1 &  SDC2 & SDC3-l/r & SDC4 & IMEX1 & IMEX2 & IMEX3 &IMEX4\\
\hline
Implicit fluid solve       & 1 & 1 &  2 & 6 &  8  & 1 & 1 & 3 &  5    \\
Implicit structure solve  & 1 & 1 &  2 & 6 &  8  & 1 & 1 & 3 &  5   \\
\bottomrule[1.5pt]
\end{tabular}
\caption{Number of Implicit solvings during one time step for SDC and IMEX based partitioned solvers}
\label{tab:efficiency_sdc_imex}
\end{table}

\section{Conclusion}
This paper introduces a framework for constructing high-order, stable, partitioned solvers for
general multiphysics problems, whose governing PDEs are first discretized in space only to a set of first-order ODEs.
The ODE system is solved by spectral deferred correction methods, wherein arbitrarily high-order accuracy is attained by performing a series of correction
sweeps using a low-order solver.  When the low-order solver is designed to be partitioned, the corresponding SDC solver is partitioned.
Moreover, thanks to these correction sweeps or iterations, the resultant SDC solvers are more stable, which has been demonstrated in the present work.
Sufficient conditions to construct consistent low-order solvers are derived and based on these conditions, partitioned multiphysics solvers up to fourth order are constructed,  and their properties are analyzed in detail. The stability property of SDC1 is  studied based on couple linear model problem, which can be used to guide the design of more robust partitioned solvers. Further stability analysis for different choices of low-order approximation will be considered in the future. The number of implicit solvings increases quadratically along with the increasing of the order of accuracy, how to improve the efficiency is also worth future investigations.

\section*{Acknowlegement}
Daniel Z. Huang gratefully acknowledges support from the Jet Propulsion Laboratory (JPL) under Contract JPL-RSA No. 1590208.
This work was also supported by the National Aeronautics and Space Administration (NASA) under grant number NNX16AP15A.
Lawrence Livermore National Laboratory is operated by Lawrence Livermore National Security, LLC, for the U.S. Department of Energy,
National Nuclear Security Administration under Contract DE-AC52-07NA27344. LLNL-JRNL-788544.

\appendix

{
\section{Analytical comparison of different schemes for the ODE system in \Cref{SEC: APP ODE} }
\label{sec: proof}
Let $a_{ij}\,, i=1,2\,,j=1,2$ be the entries of the coefficient matrix in the ODE system in \Cref{SEC: APP ODE}, the  monolithic forward Euler scheme, the partitioned SDC1 scheme, and the fixed point iteration scheme are analytically studied in this section.

The monolithic forward Euler scheme updates the state as
\begin{equation}
\ubm_{n+1} = \ubm_{n} + \Delta t \bm{A} \ubm_{n}.
\end{equation}
The update matrix becomes
\begin{equation}
\bm{C} =  \bm{I} + \Delta t \bm{A},
\end{equation}
and its spectrum radius is $\rho(\bm{C}) = \max\{|1 - \Delta t|, |1 - \Delta t \alpha|\}$.
Therefore, the maximum time step is $\Delta t_{\max} = \min\{\frac{2}{\alpha}, 2\}$.

The partitioned SDC1 scheme~(1 iteration) in \cref{ALG: SDC WEAK GS} updates the state as
\begin{equation}
\begin{aligned}
&u_{n+1}^{1} = u_{n}^{1} +  \Delta t\Big(\big(a^{11} u_{n+1}^{1} + a^{12} u_{n}^{2}\big) -  \big(a^{11} u_{n}^{1} + a^{12} u_{n}^{2}\big) \Big) + \Delta t  \big(a^{11} u_{n}^{1} + a^{12} u_{n}^{2}\big) \\
&u_{n+1}^{2} = u_{n}^{2} +  \Delta t\Big(\big(a^{21} u_{n+1}^{1} + a^{22} u_{n+1}^{2}\big) -  \big(a^{21} u_{n}^{1} + a^{22} u_{n}^{2}\big) \Big)  + \Delta t \big(a^{21} u_{n}^{1} + a^{22} u_{n}^{2}\big).
\end{aligned}
\end{equation}
Bringing the parameters in \Cref{SEC: APP ODE}, the update matrix becomes
\begin{equation}
\bm{C} =  \begin{bmatrix}
 1 & \Delta t \\
 -\frac{\alpha \Delta t}{1 + \Delta t(\alpha + 1)} & \frac{1 - \alpha \Delta t^2}{1 + \Delta t(\alpha + 1)}
\end{bmatrix},
\end{equation}
and its eigenvalues satisfy
\begin{equation}
\label{EQ:LAMBDA2}
\lambda^2 - \Big(1 + \frac{1 - \alpha \Delta t^2}{1 + \Delta t (\alpha + 1)} \Big)\lambda + \frac{1}{1 + \Delta t (\alpha + 1)} = 0,
\end{equation}
If \cref{EQ:LAMBDA2} has a pair of  complex-conjugate eigenvalues, we have $\lambda_1 \lambda_2 = \frac{1}{1 + \Delta t (\alpha + 1)} < 1$, therefore it is unconditional stable.
If \cref{EQ:LAMBDA2} has positive eigenvalues, \cref{EQ:LAMBDA2} can be rearranged as
\begin{equation}
\Big(\lambda - 1\Big)\Big(\lambda - \frac{1}{1 + \Delta t (\alpha + 1)} \Big) + \frac{\alpha \Delta t^2}{1 + \Delta t (\alpha + 1)}\lambda = 0,
\end{equation}
therefore, the positive eigenvalue is smaller than $1$. And the only unstable case corresponds to eigenvalues smaller than $-1$.
Therefore, the maximum time step is $\Delta t_{\max} = \frac{\alpha + 1 + \sqrt{(\alpha + 1)^2 + 4\alpha}}{\alpha}$.

The fixed point iteration scheme with the same type of Gauss-Seidel coupling starts with a suitably predicted  $\tilde{u}_{n+1}^{2} = u_{n}^{2}$, and solves each subsystem equation
\begin{equation}
\begin{aligned}
&u_{n+1}^{1} = u_{n}^{1} +  \Delta t\Big(a^{11} u_{n+1}^{1} + a^{12} \tilde{u}_{n+1}^{2}\Big),\\
&u_{n+1}^{2} = u_{n}^{2} +  \Delta t\Big(a^{21} u_{n+1}^{1} + a^{22} u_{n+1}^{2} \Big)  .
\end{aligned}
\end{equation}
Then it updates the predicted state $\tilde{u}_{n+1}^{2} := u_{n+1}^{2}$ until it converges.
The update equation of the fixed point iteration becomes
\begin{equation}
\begin{aligned}
u_{n+1}^{2} = \frac{\Delta t^2 a^{12} a^{21}}{\big(1 - \Delta t a^{11}\big)\big(1 - \Delta t a^{22}\big)}\tilde{u}_{n+1}^{2} +  \frac{1}{1 - \Delta t a^{22}}u_{n}^{2} + \frac{\Delta t a^{21}}{\big(1 - \Delta t a^{11}\big)\big(1 - \Delta t a^{22}\big)} {u}_{n}^{1}.
\end{aligned}
\end{equation}
Bringing the parameters in \Cref{SEC: APP ODE}, the fixed point iteration converges with  $\Delta t_{\max} = \frac{\alpha + 1 + \sqrt{(\alpha + 1)^2 + 4\alpha}}{2\alpha}$, and the linear convergence rate is
\begin{equation}
\mathcal{O}(\frac{\Delta t^2 \alpha}{1 + \Delta t(\alpha + 1)}).
\end{equation}

For this specific ODE model problem, our partitioned SDC1 scheme has larger stable time step. As for high order partitioned SDC schemes, even the iteration number increases quadratically, for 1-st to 4-th order SDC schemes, the iteration numbers might be fewer than that of the fixed point iteration.

}

\newpage
\bibliography{refs}
\bibliographystyle{unsrt}

\end{document}